
\documentclass[11pt]{amsart}
\usepackage[T1]{fontenc}

\usepackage{amssymb}
\usepackage{enumerate}
\usepackage{color}
\usepackage{graphicx}
\usepackage{amsmath}
\usepackage{mathtools}

\input xy
\xyoption{all}

\textwidth=480pt
\textheight=670pt
\oddsidemargin=0pt
\evensidemargin=0pt
\topmargin=-10pt



\newcommand{\e}{\varepsilon}
\newcommand{\IC}{\mathbb{C}}

\newcommand{\N}{\mathbb{N}}
\newcommand{\R}{\mathbb{R}}
\newcommand{\F}{\mathcal{F}}

\renewcommand{\phi}{\varphi}
\newcommand{\IN}{\mathbb N}
\newcommand{\IR}{\mathbb R}

\newcommand{\w}{\omega}
\newcommand{\0}{\textbf{0}}

\newcommand{\id}{\operatorname{id}}

\newcommand{\Lip}{\operatorname{Lip}}
\newcommand{\diam}{\operatorname{diam}}
\newcommand{\dist}{\operatorname{dist}}
\newcommand{\cl}{\operatorname{cl}}
\newcommand{\Ra}{\Rightarrow}
\newcommand{\defeq}{\coloneqq}

\newtheorem{theorem}{Theorem}[section]
\newtheorem{proposition}[theorem]{Proposition}
\newtheorem{lemma}[theorem]{Lemma}
\newtheorem{claim}[theorem]{Claim}
\newtheorem{corollary}[theorem]{Corollary}
\theoremstyle{definition}
\newtheorem{definition}[theorem]{Definition}
\newtheorem{remark}[theorem]{Remark}
\newtheorem{example}[theorem]{Example}
\usepackage{easyReview}

\title{Characterizing Lipschitz images of injective metric spaces}

\author[T. Banakh, J. B\k{a}k, J. Garbuli\'{n}ska-W\k{e}grzyn, M. Nowak, M. Pop\l{}awski]{Taras Banakh, Judyta B\k{a}k, Joanna Garbuli\'{n}ska-W\k{e}grzyn,\\ Magdalena Nowak, Micha\l{} Pop\l{}awski}

\address{T. Banakh
	(ORCID 0000-0001-6710-4611): 
Ivan Franko National University of Lviv (Ukraine), and 
	Mathematics Department, 
	Jan Kochanowski University in Kielce,
	Uniwersytecka 7, 25-406 Kielce, Poland}
\email{t.o.banakh@gmail.com}

\address{J. B\k{a}k: 
	(ORCID 0000-0001-8027-7226): 
	Mathematics Department, 
	Jan Kochanowski University in Kielce,
	Uniwersytecka 7, 25-406 Kielce, Poland}
\email{judyta.bak@ujk.edu.pl}

\address{J. Garbuli\'nska-W\k{e}grzyn
	(ORCID 0000-0001-7217-2002): 
	Mathematics Department, 
	Jan Kochanowski University in Kielce,
	Uniwersytecka 7, 25-406 Kielce, Poland}
\email{jgarbulinska@ujk.edu.pl}

\address{M. Nowak
	(ORCID 0000-0003-1915-0001): 
	Mathematics Department, 
	Jan Kochanowski University in Kielce,
	Uniwersytecka 7, 25-406 Kielce, Poland}
\email{magdalena.nowak805@gmail.com}

\address{M. Pop\l{}awski
	(ORCID 0000-0002-2725-9675): 
	Mathematics Department, 
	Jan Kochanowski University in Kielce,
	Uniwersytecka 7, 25-406 Kielce, Poland}
\email{michal.poplawski.m@gmail.com}

\subjclass[2010]{Primary:54E35, 54E40; Secondary: 51F30, 54C55, 54E45, 54E50, 54F15}
\keywords{Injective metric space, Lipschitz map, Lipschitz image, Urysohn metric space, Lipschitz connected metric space}
\date{\today}

\begin{document}
	
\begin{abstract} A metric space $X$ is {\em injective} if every non-expanding map $f:B\to X$ defined on a subspace $B$ of a metric space $A$ can be extended to a non-expanding map $\bar f:A\to X$. We prove that a metric space $X$ is a Lipschitz image of an injective metric space if and only if $X$ is {\em Lipschitz connected} in the sense that for every points $x,y\in X$, there exists a Lipschitz map $f:[0,1]\to X$ such that $f(0)=x$ and $f(1)=y$. In this case the metric space $X$ carries a well-defined intrinsic metric.
A metric space $X$ is a Lipschitz image of a compact injective metric space if and only if $X$ is compact, Lipschitz connected and its intrinsic metric is totally bounded. A metric space $X$ is a Lipschitz image of a separable injective metric space if and only if $X$ is a Lipschitz image of the Urysohn universal metric space if and only if $X$ is analytic, Lipschitz connected and its intrinsic metric is separable.   
\end{abstract}
\maketitle
%

\section{Introduction}

By the classical Hahn--Mazurkiewicz Theorem \cite{Ha,M}, a Hausdorff space $X$ is a continuous image of the unit segment $[0,1]$ if and only if $X$ is a Peano continuum, i.e., $X$ is compact, metrizable, connected and locally connected. 
A metric counterpart of the Hahn--Mazurkiewicz Theorem is due to Wa\.zewski \cite{Waz} and Eilenberg, Harrold \cite{EH} who proved that a metric space $X$ is the image of the closed interval $[0,1]$ under a Lipschitz map if and only if $X$ has finite length. The unit interval and the real line are the most important examples of injective metric spaces. So, it is natural to ask which metric spaces are Lipschitz images of injective metric spaces.
In this paper we address this problem and characterize metric spaces which are Lipschitz images of (separable or totally bounded) injective metric spaces. 
\smallskip

For a metric space $X$, its metric will be denoted by $d_X$. For a map $f:X\to Y$, its {\em Lipschitz constant} is defined as a (finite or infinite) number
$$\Lip(f)\defeq\sup\limits_{{x,x'\in X}\atop{x\ne x'}}\frac{d_Y(f(x),f(x'))}{d_X(x,x')}\in[0,\infty].$$  In this definition we assume that $\sup\varnothing=0$. 

A map $f:X\to Y$ between metric spaces $X,Y$ is called
\begin{itemize}
\item  {\em Lipschitz} if $\Lip(f)<\infty$;
\item {\em $L$-Lipschitz} for a real constant $L$, if $\Lip(f)\le L$;
\item{\em non-expanding} if it is 1-Lipschitz.
\end{itemize}

A metric space $X$ is {\em Lipschitz connected} if for every points $x,y\in X$ there exists a Lipschitz map $f:[0,1]\to X$ such that $f(0)=x$ and $f(1)=y$. Such a map $f$ is called a {\em Lipschitz path} connecting the points $x,y$. 
It is clear that every Lipschitz connected metric space is a path connected topological space. Every Lipschitz connected metric space $X$ carries the {\em intrinsic metric} $d_{IX}:X\times X\to \IR$ defined by
$$
d_{IX}(x,y)\defeq\inf\{\Lip(f):f\in X^{[0,1]}\;\wedge\; f(0)=x\;\wedge\;f(1)=y\}.
$$ 
for any points $x,y\in X$. In the above formula, $X^{[0,1]}$ stands for  the set of all functions from $[0,1]$ to $X$.

A metric space $X$ is called {\em injective} if every $1$-Lipschitz map $f:B\to X$ defined on a subspace $B$ of a metric space $A$ extends to a $1$-Lipschitz map $\bar f:A\to X$. 

Injective metric spaces form a well-known and well-studied class of complete metric spaces, \cite{B}, \cite[p. 47]{GK}. 
 Such spaces are known under two alternative names: hyperconvex metric spaces {\cite{AP, EK, K}} and $1$-Lipschitz Absolute Retracts \cite[Ch.I.1.]{BL}.
The following (known) characterization of injective metric spaces can be deduced from \cite{AP, BL}.

\begin{theorem} For a metric space $X$, the following conditions are equivalent:
\begin{enumerate}
\item[\textup{(1)}]\label{def_inj} $X$ is an injective metric space;
\item[\textup{(2)}]\label{def_LAR} $X$ is {\em $1$-Lipschitz Absolute Retract}, which means that for every metric space $Y$ containing $X$, there exists a $1$-Lipschitz map $f:Y\to X$ such that $f(x)=x$ for all $x\in X$;
\item[\textup{(3)}]\label{def_hyp} $X$ is {\em hyperconvex} in the sense that for any indexed family of closed balls $\big(B[x_i,r_i]\big)_{i\in I}$ in $X$ with $d_X(x_i,x_j)\le r_i+r_j$ for all $i,j\in I$, the intersection $\bigcap_{i\in I}B[x_i,r_i]$ is not empty.
\item[\textup{(4)}]\label{def_m+b}  $X$ is {\em metrically convex} (for every $a,b\in X$ and $t\in[0,1]$ there exists $x\in X$ such that $d_X(a,x)=t\cdot d_X(a,b)$ and $d_X(x,b)=(1-t)\cdot d_X(a,b)$) and {\em has a binary intersection property} (every collection of mutually intersecting closed balls in $X$ has a common point).
\item[\textup{(5)}]\label{def_ext}  for every metric space $Y$ containing $X$ and for every metric space $Z$, every Lipschitz map $X\to Z$ has an extension $Y\to Z$ with the same Lipschitz constant.
\end{enumerate}
\end{theorem}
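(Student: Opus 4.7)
The plan is to establish a cycle of implications linking (1), (2), (3), and (5), and to show separately that (3) and (4) are equivalent. Since (1)--(3) and (5) are all stated in terms of $1$-Lipschitz maps or balls of prescribed radii, I would first prove the easier block (1)$\Leftrightarrow$(2)$\Leftrightarrow$(5), then move down to (3) and (4), and close the loop with the Aronszajn--Panitchpakdi style extension argument (3)$\Ra$(1), which is the only nontrivial transfinite step.

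For (1)$\Ra$(2), given $X\subseteq Y$, apply injectivity to the identity map $\id_X\colon X\to X$ to obtain a $1$-Lipschitz extension $r\colon Y\to X$, which is automatically a retraction. For (2)$\Ra$(5), given an $L$-Lipschitz map $f\colon X\to Z$ and $X\subseteq Y$, compose $f$ with the retraction $r\colon Y\to X$; the map $f\circ r\colon Y\to Z$ extends $f$ with the same Lipschitz constant. For (5)$\Ra$(2), apply (5) to $Z=X$ and $f=\id_X$. For (2)$\Ra$(3), given a family of balls $\bigl(B[x_i,r_i]\bigr)_{i\in I}$ in $X$ with $d_X(x_i,x_j)\le r_i+r_j$, enlarge $X$ to $X\cup\{z\}$ by declaring $d(z,x_i)\defeq r_i$; the triangle inequality in $X\cup\{z\}$ is exactly the hypothesis $d_X(x_i,x_j)\le r_i+r_j$, so the enlarged space is metric. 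Applying the retraction $r$ to $z$ yields a point $r(z)\in X$ with $d_X(r(z),x_i)\le d(z,x_i)=r_i$ for all $i$, lying in $\bigcap_{i\in I}B[x_i,r_i]$.

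The equivalence (3)$\Leftrightarrow$(4) is a short calculation. For (3)$\Ra$(4), metric convexity follows by applying hyperconvexity to the pair of balls $B[a,t\cdot d_X(a,b)]$ and $B[b,(1-t)\cdot d_X(a,b)]$, whose radii sum exactly to $d_X(a,b)$; the binary intersection property follows because any two balls sharing a point $p$ automatically satisfy $d_X(x_i,x_j)\le r_i+r_j$ via the triangle inequality at $p$. Conversely, for (4)$\Ra$(3), given a family of balls with $d_X(x_i,x_j)\le r_i+r_j$, metric convexity applied to each pair $x_i,x_j$ with parameter $t=r_i/(r_i+r_j)$ produces a point witnessing that $B[x_i,r_i]\cap B[x_j,r_j]\neq\varnothing$ (with the degenerate case $r_i+r_j=0$ giving $x_i=x_j$), after which the binary intersection property delivers the full intersection.

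The main technical step, and the place where I expect the real work to sit, is (3)$\Ra$(1). Fix a $1$-Lipschitz map $f\colon B\to X$ with $B\subseteq A$, and order by extension the set of $1$-Lipschitz maps $g\colon B_g\to X$ with $B\subseteq B_g\subseteq A$ and $g|_B=f$. Zorn's lemma supplies a maximal such $g\colon B'\to X$; the task is to show $B'=A$. If some $a\in A\setminus B'$ existed, one would need to find a value $y\in X$ with $d_X(y,g(b))\le d_A(a,b)$ for every $b\in B'$. This is exactly a hyperconvexity requirement: for $b,b'\in B'$,
$$d_X(g(b),g(b'))\le d_A(b,b')\le d_A(a,b)+d_A(a,b'),$$
so the balls $B[g(b),d_A(a,b)]$ pairwise satisfy the hypothesis of (3), hence intersect in a common point $y$ by hyperconvexity. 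Setting $g(a)\defeq y$ yields a strictly larger $1$-Lipschitz extension, contradicting maximality. The subtle points here are verifying that Zorn's lemma applies (unions of chains of $1$-Lipschitz extensions are $1$-Lipschitz extensions) and keeping the domain of hyperconvexity genuinely arbitrary in cardinality, which is why (3) is stated for indexed families rather than just pairs.
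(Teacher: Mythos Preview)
Your argument is largely correct and far more self-contained than the paper's, which simply cites \cite{BL} for $(5)\Leftrightarrow(1)\Leftrightarrow(2)$ and $(2)\Leftrightarrow(4)$, cites \cite{AP} for $(3)\Rightarrow(2)$, and only writes out $(4)\Rightarrow(3)$ explicitly (with essentially the argument you give). The logical skeleton you chose---the block $(1)\Leftrightarrow(2)\Leftrightarrow(5)$, the block $(3)\Leftrightarrow(4)$, and the Aronszajn--Panitchpakdi step $(3)\Rightarrow(1)$---is the standard one and matches what the paper outsources to the references.

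There is, however, a genuine gap in your $(2)\Rightarrow(3)$. Declaring $d(z,x_i)\defeq r_i$ leaves $d(z,x)$ undefined for points $x\in X$ not among the centres, so $X\cup\{z\}$ is not yet a metric space containing $X$ and condition (2) cannot be invoked. Even restricted to $\{z\}\cup\{x_i:i\in I\}$, the hypothesis $d_X(x_i,x_j)\le r_i+r_j$ supplies only the triangle inequality with $z$ as the middle vertex; the other direction $|r_i-r_j|\le d_X(x_i,x_j)$ is not assumed and can fail (take $x_i=x_j$ with $r_i\ne r_j$). The standard repair is to set
\[
d(z,x)\defeq\inf_{i\in I}\bigl(r_i+d_X(x,x_i)\bigr)\qquad\text{for all }x\in X.
\]
This function is $1$-Lipschitz in $x$, satisfies $d(z,x)+d(z,y)\ge d_X(x,y)$ thanks to the hypothesis $d_X(x_i,x_j)\le r_i+r_j$, and obeys $d(z,x_j)\le r_j$ for every $j$, which is precisely what is needed so that the retraction value $r(z)$ lies in $\bigcap_{i}B[x_i,r_i]$. (If $d(z,x)=0$ for some $x\in X$, that $x$ already lies in the intersection and you are done.) With this correction the remainder of your proof stands.
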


\begin{proof}
The implications $(5)\Leftrightarrow(1)\Leftrightarrow(2)$ are proved in \cite[Proposition 1.2]{BL}; $(3)\Rightarrow(2)$ in \cite[Theorem 8]{AP}, and $(2)\Leftrightarrow(4)$ in \cite[Proposition 1.4]{BL}. To see that $(4)\Ra(3)$,  take any indexed family $\big(B[x_i,r_i]\big)_{i\in I}$ of closed balls in $X$ such that  $d_X(x_i,x_j)\leq r_i+r_j$ for every $i,j\in I$. The metric convexity of $X$ ensures that for any distinct indices $i,j\in I$, the closed balls $B[x_i,r_i]$ and $B[x_j,r_j]$ have a common point. The binary intersection property guarantees that $\bigcap_{i\in I}B[x_i,r_i]\neq\emptyset$.
\end{proof}

Another important source of injective metric spaces are complete $\IR$-trees, see \cite{C, K}. 
A metric space $X$ is called an {\em $\IR$-tree} if for every points $x,y\in X$ the {\em metric segment} $$[x,y]\defeq\{z\in X:d_X(x,y)=d_X(x,z)+d_X(z,y)\}$$is isometric to the interval $[0,d_X(x,y)]$ on the real line, and for every points $x,y,z\in X$ with $[x,y]\cap[y,z]=\{y\}$ we have $d_X(x,z)=d_X(x,y)+d_X(y,z)$. By \cite{K}, every complete $\IR$-tree is an injective metric space. Nevertheless, classes of $\IR$-trees and injective metric spaces are distinct. For instance $(0,1)$ (with the euclidean metric) is $\IR$-tree, which is not injective, since injective metric spaces are complete. On the other hand, $[0,1]^2$ with the maximum metric is an injective metric space, which is not an $\IR$-tree (the metric segment $[(0,0),(1,0)]$ is not isometric to $[0,1]$). 

A metric space $Y$ is called a {\em Lipschitz image} (resp. {\em $1$-Lipschitz image}) of a metric space $X$ if there exists a surjective Lipschitz map $f:X\to Y$ (with $\Lip(f)\le 1$).

One can easily reformulate Hahn-Mazurkiewicz theorem \cite{Ha,M}  to the following version$\colon$ a Hausdorff space $X$ is a continuous image of the unit segment $[0,1]$ if and only if $X$ is a continuous image of some compact injective metric space $Y$. 
However, the Ważewski result in \cite{Waz} implies that the class of Lipschitz images of the unit interval $[0,1]$ is strictly smaller than the class of Lipschitz images of compact injective metric spaces: a counterexample is the injective metric space $[0,1]^2$ which is not the Lipschitz image of the unit segment $[0,1]$. 

The following three characterizations proved in Sections~\ref{s:hedgehog}, \ref{s:compact}, \ref{s:separable} are the main results of this paper.

\begin{theorem}\label{t:hedgehog} For a metric space $X$, the following conditions are equivalent:
\begin{enumerate}
\item[\textup{1)}] $X$ is a Lipschitz image of an injective metric space;
\item[\textup{2)}] $X$ is a Lipschitz image of a complete $\IR$-tree;
\item[\textup{3)}] $X$ is Lipschitz connected.
\end{enumerate}
\end{theorem}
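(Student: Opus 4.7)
The plan is to establish $(2)\Rightarrow(1)\Rightarrow(3)\Rightarrow(2)$. The implication $(2)\Rightarrow(1)$ is immediate from the fact, cited in the excerpt, that every complete $\IR$-tree is an injective metric space.

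For $(1)\Rightarrow(3)$, given a surjective Lipschitz map $f\colon Y\to X$ from an injective metric space $Y$ and points $x,y\in X$, I would pick preimages $u\in f^{-1}(x)$ and $v\in f^{-1}(y)$ and produce a geodesic segment joining them in $Y$ directly from the extension property. The two-point map $\{0,d_Y(u,v)\}\to Y$ sending $0\mapsto u$ and $d_Y(u,v)\mapsto v$ is isometric, hence $1$-Lipschitz, so by injectivity of $Y$ it extends to a $1$-Lipschitz map $g\colon[0,d_Y(u,v)]\to Y$. A linear reparametrisation of $g$ onto $[0,1]$, composed with $f$, is a Lipschitz path from $x$ to $y$ in $X$, so $X$ is Lipschitz connected.

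The main content is in $(3)\Rightarrow(2)$, for which I plan a \emph{metric hedgehog} construction. Fix a basepoint $x_0\in X$. Using Lipschitz connectedness, for every $x\in X$ I choose a number $L_x\ge 0$ and a $1$-Lipschitz path $\gamma_x\colon[0,L_x]\to X$ with $\gamma_x(0)=x_0$ and $\gamma_x(L_x)=x$ (any Lipschitz path from $x_0$ to $x$ becomes $1$-Lipschitz after sufficiently stretching its domain). Let $T$ be obtained from the disjoint union $\bigsqcup_{x\in X}[0,L_x]$ by identifying all origins to a single point $\star$, and endow it with the hedgehog metric: Euclidean within each spine, and $d_T(s,s')=s+s'$ between two different spines. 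Define $F\colon T\to X$ by $F(s)=\gamma_x(s)$ on the $x$-spine. The definition is unambiguous because $\gamma_x(0)=x_0$ for all $x$, and $F$ is surjective since each $x\in X$ equals $F(L_x)$ on its own spine.

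It then remains to verify that $T$ is a complete $\IR$-tree and that $F$ is $1$-Lipschitz. The Lipschitz bound is clear on each spine, and between two different spines it is forced by the triangle inequality
\[
d_X(\gamma_x(s),\gamma_{x'}(s'))\le d_X(\gamma_x(s),x_0)+d_X(x_0,\gamma_{x'}(s'))\le s+s'=d_T(s,s').
\]
Completeness of $T$ follows from a case split on Cauchy sequences, which either eventually stay in a single closed spine or have distances from $\star$ tending to zero. For the $\IR$-tree axioms one observes that the metric segment between two points of $T$ is either a subinterval of a common spine or the concatenation of the two subsegments joining each point to $\star$, and is therefore isometric to a real interval; the forking condition $d_T(x,z)=d_T(x,y)+d_T(y,z)$ when $[x,y]\cap[y,z]=\{y\}$ then follows by a short case analysis on the spines containing $x,y,z$. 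The main, though ultimately routine, obstacle I foresee is working through this case analysis carefully; crucially, no cardinality assumption on $X$ is required for it to go through.
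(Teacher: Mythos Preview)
Your proposal is correct and follows essentially the same route as the paper: the same chain of implications, the same use of the extension property for $(1)\Rightarrow(3)$, and the same hedgehog construction for $(3)\Rightarrow(2)$. The only cosmetic difference is that the paper phrases the hedgehog as the $\IR$-tree $T_w$ associated to the trivial weighted tree $(X,\preceq,w)$ with root $x_0$ and weights $w_x=L_x$, thereby reusing their general machinery (and getting completeness for free since this tree has no infinite branches), whereas you build the hedgehog by hand and verify the $\IR$-tree and completeness axioms directly.
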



\begin{theorem}\label{t:compact} For a metric space $X$, the following conditions are equivalent: 
\begin{enumerate}
\item[\textup{1)}] $X$ is a Lipschitz image of a totally bounded injective metric space;
\item[\textup{2)}] $X$ is a Lipschitz image of  a compact $\IR$-tree;
\item[\textup{3)}] $X$ is compact, Lipschitz connected, and its intrinsic metric is totally bounded.
\end{enumerate}
\end{theorem}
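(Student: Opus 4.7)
The plan is to establish the cycle $(2)\Rightarrow(1)\Rightarrow(3)\Rightarrow(2)$. The implication $(2)\Rightarrow(1)$ is immediate: a compact $\IR$-tree is totally bounded and, being complete, is injective by the result of Kirk cited in the introduction. For $(1)\Rightarrow(3)$, let $f:Y\to X$ be a surjective Lipschitz map with $Y$ totally bounded and injective. Since injective metric spaces are complete, $Y$ is in fact compact, so $X$ is compact. Lipschitz connectedness of $X$ follows from Theorem~\ref{t:hedgehog}. For total boundedness of $d_{IX}$, one uses that $Y$ is metrically convex: every pair $y_1,y_2\in Y$ is joined by an isometric embedding $[0,d_Y(y_1,y_2)]\to Y$, which after rescaling and composition with $f$ yields a Lipschitz path in $X$ from $f(y_1)$ to $f(y_2)$ of Lipschitz constant at most $\Lip(f)\cdot d_Y(y_1,y_2)$. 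Thus $f:(Y,d_Y)\to(X,d_{IX})$ is $\Lip(f)$-Lipschitz, and its image $X$ is totally bounded in $d_{IX}$.

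The heart of the proof is $(3)\Rightarrow(2)$: I would construct a compact $\IR$-tree $T$ and a $1$-Lipschitz surjection $T\to X$ by gluing path-spines in layers calibrated to the total boundedness of $d_{IX}$. Fix a sequence $(x_n)_{n\ge 1}$ dense in $(X,d_{IX})$ and, using total boundedness, pick $n_0<n_1<n_2<\cdots$ so that $\{x_1,\dots,x_{n_k}\}$ is a $2^{-k}$-net in $(X,d_{IX})$. Put $T_1:=\{v_1\}$ with $f(v_1):=x_1$. For $n\in\{2,\dots,n_0\}$ let $\sigma(n):=1$ and $L_n:=d_{IX}(x_1,x_n)+2^{-n}$. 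For $n>n_0$, let $k\ge 1$ be the unique integer with $n_{k-1}<n\le n_k$, choose $\sigma(n)\in\{1,\dots,n_{k-1}\}$ with $d_{IX}(x_{\sigma(n)},x_n)\le 2^{-(k-1)}$, and set $L_n:=d_{IX}(x_{\sigma(n)},x_n)+2^{-n}$. Form $T_n$ from $T_{n-1}$ by gluing a segment of length $L_n$ at $v_{\sigma(n)}$, label the free endpoint $v_n$, and extend $f$ along this segment by a $1$-Lipschitz reparametrisation of a Lipschitz path in $X$ from $x_{\sigma(n)}$ to $x_n$ (which exists since $L_n>d_{IX}(x_{\sigma(n)},x_n)$). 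By induction $T_n$ is an $\IR$-tree and $f$ is $1$-Lipschitz on $T:=\bigcup_n T_n$, as the unique tree-path between two points concatenates sub-segments on each of which $f$ is $1$-Lipschitz by construction.

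The main obstacle is total boundedness of $T$, and this is exactly where the layering pays off. By construction, every spine added at stage $k\ge 1$ (that is, with $n_{k-1}<n\le n_k$) is rooted at a vertex of the finite subtree $T_{n_{k-1}}$ and has length at most $2^{-(k-1)}+2^{-n}$. Telescoping over $k$ then shows that $T$ lies in a neighbourhood of $T_{n_{k-1}}$ of radius at most $2^{-(k-2)}+2^{-n_{k-1}+1}$, a quantity that tends to $0$ as $k\to\infty$. Given $\varepsilon>0$, choose $k$ making this radius $<\varepsilon/2$ and take a finite $\varepsilon/2$-net $N$ of $T_{n_{k-1}}$; then $N$ is an $\varepsilon$-net of $T$. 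Thus $T$ is totally bounded, and its completion $\bar T$ is compact; since the completion of an $\IR$-tree is again an $\IR$-tree, $\bar T$ is a compact $\IR$-tree. The $1$-Lipschitz map $f$ extends uniquely to $\bar f:\bar T\to X$, whose image is compact, hence closed in $(X,d_X)$, and contains $\{x_n\}$; as $d_X\le d_{IX}$, the set $\{x_n\}$ is $d_X$-dense in $X$, so $\bar f(\bar T)=X$, as required.
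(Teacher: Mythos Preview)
Your proof is correct and follows essentially the same strategy as the paper's. The easy implications are handled the same way (compact $\IR$-trees are complete hence injective; injective spaces are complete, compact, and metrically convex, so $f$ is Lipschitz for $d_{IX}$ and carries total boundedness forward), and the substantive implication $(3)\Rightarrow(2)$ is proved by building an $\IR$-tree out of a hierarchy of nets in $(X,d_{IX})$ and $1$-Lipschitz paths between them.

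The difference is purely in packaging. The paper develops a general framework of \emph{weighted trees} $(T,w)$ and their associated $\IR$-trees $T_w$ in Section~4, proves a criterion (precompactness of $(T,w)$) for total boundedness of $T_w$, and then applies this with $T$ the tree of a refining sequence of finite partitions of $X$ by $d_{IX}$-diameter. You instead build the tree directly by sequentially gluing one spine at a time to a chosen predecessor vertex, and verify total boundedness by a hands-on telescoping estimate. Your route is more elementary and self-contained; the paper's route pays off because the same weighted-tree machinery is reused verbatim in the separable case (Theorem~\ref{t:analytic}). One small point: your appeal to ``metrically convex $\Rightarrow$ isometric segments'' tacitly uses Menger's theorem (completeness plus metric convexity yields geodesics), and ``the completion of an $\IR$-tree is an $\IR$-tree'' is a standard fact you invoke without proof, whereas the paper gets both from its explicit description of $\overline{T}_w$. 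Neither is a gap, just a difference in what is taken as background.
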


\begin{theorem}\label{t:analytic} For a metric space $X$, the following conditions are equivalent:
\begin{enumerate}
\item[\textup{1)}] $X$ is a Lipschitz image of a separable injective metric space;
\item[\textup{2)}] $X$ is a Lipschitz image of a separable complete $\IR$-tree;
\item[\textup{3)}] $X$ is a $1$-Lipschitz image of the Urysohn universal space;
\item[\textup{4)}] $X$ is analytic, Lipschitz connected, and its intrinsic metric is separable.
\end{enumerate}
\end{theorem}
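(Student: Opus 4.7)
The plan is to establish the cycle $(2)\Ra(1)\Ra(3)\Ra(4)\Ra(2)$, so that the bulk of the work is concentrated in the implication $(4)\Ra(2)$. The implication $(2)\Ra(1)$ is immediate: by \cite{K}, every complete $\IR$-tree is injective, and separability is trivially preserved.

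For $(1)\Ra(3)$, suppose $f\colon Y\to X$ is a surjective Lipschitz map from a separable injective metric space $Y$. Rescaling the metric on $Y$ by the factor $\max\{1,\Lip(f)\}$ preserves separability and injectivity and makes $f$ become $1$-Lipschitz. By the universality of the Urysohn space $\mathbb{U}$, the rescaled $Y$ embeds isometrically into $\mathbb{U}$, and since $Y$ is a $1$-Lipschitz absolute retract, the identity on $Y$ extends to a $1$-Lipschitz retraction $r\colon\mathbb{U}\to Y$. The composition $f\circ r\colon\mathbb{U}\to X$ is the required $1$-Lipschitz surjection.

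For $(3)\Ra(4)$, I first observe that $\mathbb{U}$ is Lipschitz connected with $d_{I\mathbb{U}}=d_\mathbb{U}$. Indeed, $\mathbb{U}$ is metrically convex: given $u,v\in\mathbb{U}$ and $t\in[0,1]$, the three-point metric extension of $\{u,v\}$ by a point $w$ with $d(u,w)=t\cdot d(u,v)$ and $d(w,v)=(1-t)d(u,v)$ embeds into $\mathbb{U}$ by the finite extension property, witnessing metric convexity; combined with completeness this makes $\mathbb{U}$ a geodesic space. Now if $g\colon\mathbb{U}\to X$ is a $1$-Lipschitz surjection, then $X$ is analytic as a continuous image of a Polish space, Lipschitz connected by pushing forward geodesics of $\mathbb{U}$, and its intrinsic metric is separable because the $g$-image of a countable dense set $D\subseteq\mathbb{U}$ is $d_{IX}$-dense, since $d_{IX}(g(u),g(v))\le d_\mathbb{U}(u,v)$ via pushed-forward geodesics.

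The main implication $(4)\Ra(2)$ is to produce a separable complete $\IR$-tree $T$ and a Lipschitz surjection $\pi\colon T\to X$. Since $X$ is analytic, fix a continuous surjection $\phi\colon\mathbb{N}^{\mathbb{N}}\to X$. I would build $T$ by hedgehog-gluing along the Baire tree $\mathbb{N}^{<\mathbb{N}}$: to each node $s$ assign a point $x_s\in\phi(N_s)$, where $N_s$ is the clopen set of extensions of $s$, selected inductively so that $x_{\sigma\restriction n}\to\phi(\sigma)$ for every $\sigma\in\mathbb{N}^{\mathbb{N}}$; then join $x_s$ to $x_{s\frown k}$ by a $1$-Lipschitz path $\gamma_{s,k}\colon[0,\ell_{s,k}]\to X$ of length $\ell_{s,k}$ slightly larger than $d_{IX}(x_s,x_{s\frown k})$, available because $X$ is Lipschitz connected in the intrinsic metric. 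Gluing these arcs at their common endpoints yields a separable combinatorial $\IR$-tree whose metric completion $T$ is a separable complete $\IR$-tree; define $\pi$ by $\gamma_{s,k}$ on each arc and by $\pi(\sigma):=\phi(\sigma)$ on the branches. The main obstacle is a scheduling problem: the lengths $\ell_{s,k}$ must be chosen so that $\sum_n\ell_{\sigma\restriction n,\sigma(n+1)}<\infty$ along every branch (so that $T$ is complete and $\pi$ is Lipschitz), while still keeping $x_{\sigma\restriction n}$ close to $\phi(\sigma)$. This will be arranged by refining $\mathbb{N}^{<\mathbb{N}}$ using the modulus of continuity of $\phi$ on compacta together with a geometric schedule $\ell_{s,k}\le 2^{-|s|}$ and a $d_{IX}$-dense choice of the $x_s$, in the same spirit as the proof of Theorem~\ref{t:hedgehog}.
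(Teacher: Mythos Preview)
Your cycle $(2)\Rightarrow(1)\Rightarrow(3)\Rightarrow(4)$ is correct and essentially matches the paper. The problem lies in $(4)\Rightarrow(2)$.

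You parametrize $X$ by a continuous surjection $\phi\colon\mathbb{N}^{\mathbb{N}}\to(X,d_X)$ and then try to enforce a geometric schedule $\ell_{s,k}\le 2^{-|s|}$. Since necessarily $\ell_{s,k}\ge d_{IX}(x_s,x_{s^\frown k})$, this forces $d_{IX}(x_s,x_{s^\frown k})<2^{-|s|}$. But $\phi$ is only $d_X$-continuous, so refining the Baire tree or invoking ``the modulus of continuity of $\phi$ on compacta'' controls $d_X(x_s,x_{s^\frown k})$, not $d_{IX}(x_s,x_{s^\frown k})$, and these can differ drastically. For a concrete obstruction, take a countable bouquet of arcs $\gamma_n$ through a common point $p$, each of intrinsic length~$1$ but contained in the $d_X$-ball of radius $1/n$ about $p$; then $\phi(N_{s^\frown k})$ may lie entirely on some $\gamma_m$ different from the one carrying $x_s$, forcing $d_{IX}(x_s,x_{s^\frown k})$ to be of order~$1$ however fine the refinement. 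Choosing the $x_s$ from a $d_{IX}$-dense set does not help either, since the constraint $x_{s^\frown k}\in\phi(N_{s^\frown k})$ is purely $d_X$-topological and that set need not meet any small $d_{IX}$-neighbourhood of $x_s$.

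The missing ingredient is to first upgrade analyticity from $d_X$ to $d_{IX}$. The paper does this via Lemma~\ref{l:preimage} and Lemma~\ref{d vs dp}(2): if $(X,d_X)$ is analytic and $d_{IX}$ is separable, then $(X,d_{IX})$ is itself analytic. One then takes a continuous surjection $f\colon Z\to(X,d_{IX})$ from a Polish space $Z$, replaces the metric on $Z$ by $\hat d_Z(z,z')=\max\{d_Z(z,z'),\,d_{IX}(f(z),f(z'))\}$ (still complete, topologically equivalent, and now making $f$ $1$-Lipschitz for $d_{IX}$), and only then runs your tree-of-paths construction over partitions of $Z$ of shrinking $\hat d_Z$-diameter. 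The edge from $c_{\check F}$ to $c_F$ gets weight $d_{IX}(f(c_{\check F}),f(c_F))+2^{-n}\le \hat d_Z(c_{\check F},c_F)+2^{-n}<3\cdot 2^{-n}$, so the geometric schedule---and with it the completeness of the $\IR$-tree and the surjectivity of the map onto $X$---comes for free.
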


Let us recall that a topological space $X$ is {\em analytic} if it is a continuous image of a separable complete metric space. 
The {\em Urysohn universal metric space} is a complete separable metric space $\mathbb{U}$ such that any isometric embedding of a subset $A$ of a finite metric space $B$ admits an extension to an isometric embedding of $B$ in $\mathbb{U}$. It is well-known \cite{Urys} that a Urysohn universal metric space exists and is unique up to an isometry.

\section{$\IR$-convex metric spaces and $\IR$-trees}

\begin{definition} A metric space $X$ is called {\em $\IR$-convex} if for every points $x,z\in X$, the metric segment
$$[x,z]\defeq\{y\in X:d_X(x,y)+d_X(y,z)=d_X(x,z)\}$$is isometric to the segment $[0,d_X(x,z)]$ of the real line.
\end{definition}

\begin{proposition}\label{p:unicon} For a metric space $X$, the following conditions are equivalent:
\begin{enumerate}
\item[\textup{1)}] $X$ is $\IR$-convex;
\item[\textup{2)}] for every points $x,y\in X$, there exists a unique set $I\subseteq X$ such that $x,y\in I$ and $I$ is isometric to the segment $[0,d_X(x,y)]$ on the real line;
\item[\textup{3)}] for every points $x,y\in X$ there exists a unique $1$-Lipschitz map $f:[0,d_X(x,y)]\to X$ such that $f(0)=x$ and $f(d_X(x,y))=y$.
\end{enumerate}
\end{proposition}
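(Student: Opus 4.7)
The plan is to prove the implications in the cycle $(1) \Rightarrow (2) \Rightarrow (3) \Rightarrow (1)$, with a small auxiliary lemma along the way: any $1$-Lipschitz map $f\colon[0,\ell]\to X$ with $f(0)=x$, $f(\ell)=y$ and $\ell=d_X(x,y)$ is automatically an isometry onto its image, and this image is contained in the metric segment $[x,y]$. Indeed, for each $t\in[0,\ell]$ the inequalities
\[
d_X(x,f(t))\le t,\quad d_X(f(t),y)\le \ell-t,\quad d_X(x,y)\le d_X(x,f(t))+d_X(f(t),y)
\]
force $d_X(x,f(t))=t$ and $d_X(f(t),y)=\ell-t$, so $f(t)\in[x,y]$; the same argument applied to $f(s),f(t)$ gives $d_X(f(s),f(t))=|t-s|$.

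For $(1)\Rightarrow(2)$: existence of $I$ is the definition of $\IR$-convexity (take $I=[x,y]$). For uniqueness, suppose $I\subseteq X$ contains $x,y$ and admits an isometry $\phi\colon[0,\ell]\to I$. Since $d_X(\phi(0),\phi(\ell))=\ell=d_X(x,y)$ and $\{x,y\}\subseteq I$, the map $\phi$ must send the endpoints of $[0,\ell]$ to $\{x,y\}$; after possibly reversing, assume $\phi(0)=x$, $\phi(\ell)=y$. Then $\phi$ is $1$-Lipschitz, hence lands in $[x,y]$ by the auxiliary observation. Conversely, any $z\in[x,y]$ has distance $t:=d_X(x,z)$ from $x$; composing with the $\IR$-convex parametrization $\psi\colon[0,\ell]\to[x,y]$ shows $\psi(t)=z$, and the isometry $\psi^{-1}\circ\phi$ of $[0,\ell]$ fixing endpoints is the identity, so $\phi(t)=z\in I$. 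Hence $I=[x,y]$.

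For $(2)\Rightarrow(3)$: the unique set $I$ comes with an essentially unique isometry $\phi\colon[0,\ell]\to I$ with $\phi(0)=x$, $\phi(\ell)=y$, which provides a $1$-Lipschitz $f$. For uniqueness, any $1$-Lipschitz $f\colon[0,\ell]\to X$ with the prescribed endpoints is an isometry onto $f([0,\ell])$ by the auxiliary observation; this image is then a subset of $X$ containing $x,y$ and isometric to $[0,\ell]$, so by (2) it equals $I$. Hence $\phi^{-1}\circ f$ is an endpoint-fixing isometry of $[0,\ell]$, which must be the identity.

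For $(3)\Rightarrow(1)$: the unique map $f\colon[0,\ell]\to X$ is an isometry onto a subset of $[x,y]$. It remains to show $f([0,\ell])=[x,y]$, and this is the main technical step. Given any $z\in[x,y]$, set $s=d_X(x,z)$, apply (3) to the pairs $(x,z)$ and $(z,y)$ to obtain $1$-Lipschitz maps $f_1\colon[0,s]\to X$ and $f_2\colon[0,\ell-s]\to X$, and concatenate them into $g\colon[0,\ell]\to X$ by $g(t)=f_1(t)$ for $t\le s$ and $g(t)=f_2(t-s)$ for $t\ge s$. The triangle inequality across the break point $s$ shows $g$ is $1$-Lipschitz with the correct endpoints, so by the uniqueness clause in (3) we have $g=f$; hence $f(s)=z$. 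Since $z\in[x,y]$ was arbitrary, $f$ is a surjective isometry $[0,\ell]\to[x,y]$, proving $\IR$-convexity. The gluing-plus-uniqueness trick is the only nonroutine part of the argument.
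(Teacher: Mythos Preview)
Your proof is correct and follows essentially the same strategy as the paper. The paper declares $(1)\Rightarrow(2)\Rightarrow(3)$ ``obvious'' and only spells out $(3)\Rightarrow(1)$ via the same concatenation-plus-uniqueness argument you give; your auxiliary observation that a $1$-Lipschitz path of length $d_X(x,y)$ is automatically an isometric embedding into $[x,y]$ is exactly what the paper verifies at the end of its proof, so you have simply front-loaded it and used it to make the ``obvious'' implications explicit.
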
 

\begin{proof} The implications $(1)\Ra(2)\Ra(3)$ are obvious. To prove that $(3)\Ra(1)$, assume that a metric space $X$ satisfies the condition (3). Given any points $x,y\in X$, we need to show that the metric segment $[x,y]$ is isometric to the interval $[0,d_X(x,y)]$ in the real line. By the condition (3), there exists a unique $1$-Lipschitz map $f:[0,d_X(x,y)]\to X$ such that $f(0)=x$ and $f(d_X(x,y))=y$. The triangle inequality implies that the image $I\defeq f\big[[0,d_X(x,y)]\big]$ is a subset of the metric segment $[x,y]$. Assuming that $[x,y]\ne I$, we can find a point $z\in [x,y]\setminus I$. Then $d_X(x,y)=d_X(x,z)+d_X(z,y)$. By condition (3), there exist $1$-Lipschitz maps $\alpha:[0,d_X(x,z)]\to X$ and $\beta:[0,d_X(z,y)]\to X$ such that $\alpha(0)=x$, $\alpha(d_X(x,z))=z=\beta(0)$ and $\beta(d_X(z,y))=y$. The  $1$-Lipschitz maps $\alpha,\beta$ compose a $1$-Lipschitz map $\gamma:[0,d_X(x,y)]\to X$ defined by
$$\gamma(t)=\begin{cases}
\alpha(t)&\mbox{if $t\in[0,d_X(x,z)]$};\\
\beta(t-d_X(x,z))&\mbox{if $t\in[d_X(x,z),d_X(x,y)]$}.
\end{cases}
$$
Since $\gamma(0)=x=f(0)$, $\gamma(d_X(x,y))=y=f(d_X(x,y))$, and $\gamma\ne f$, we obtain a contradiction with the uniqueness of the $1$-Lipschitz path $f$. This contradiction shows that $[x,y]=I$. The triangle inequality implies that the $1$-Lipschitz map $f:[0,d_X(x,y)]\to [x,y]$ is an isometry, witnessing that the metric space $X$ is $\IR$-convex. Indeed, for every $u,z$ such that $0 \leq u\leq z \leq d_X(x,y)$ we have $|u-z| \geq d_X(f(u),f(z)) \geq d_X(x,y)-d_X(x,f(u))-d_X(f(z),y) \geq d_X(x,y)-|0-u|-|z-d_X(x,y)|=|u-z|.$
\end{proof}

\begin{proposition} If a metric space $X$ is $\IR$-convex, then for every points $x,y,z\in X$, the intersection $[x,y]\cap [x,z]$ is equal to the metric segment $[x,u]$ for a unique point $u\in[x,y]\cap[x,z]$.
\end{proposition}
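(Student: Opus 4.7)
The plan is to parametrize both metric segments $[x,y]$ and $[x,z]$ by the unique isometries supplied by Proposition~\ref{p:unicon} and to identify their intersection with the set of parameters where the two parametrizations agree. Let $f\colon[0,d_X(x,y)]\to X$ and $g\colon[0,d_X(x,z)]\to X$ be the unique $1$-Lipschitz maps with $f(0)=g(0)=x$, $f(d_X(x,y))=y$, and $g(d_X(x,z))=z$; the proof of Proposition~\ref{p:unicon} shows that these are isometries onto $[x,y]$ and $[x,z]$, respectively. Since both maps send $0$ to $x$ isometrically, any point $p\in[x,y]\cap[x,z]$ must satisfy $p=f(s)=g(s)$ for $s=d_X(x,p)$, so the intersection is in bijection with
$$T\defeq\{s\in[0,\min\{d_X(x,y),d_X(x,z)\}]:f(s)=g(s)\}.$$

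The key step is to show that $T$ is an initial closed interval $[0,t_0]$. Closedness and $0\in T$ are immediate from continuity of $f$ and $g$. For the initial-segment property I would fix $s\in T$ and observe that both restrictions $f|_{[0,s]}$ and $g|_{[0,s]}$ are $1$-Lipschitz maps on $[0,s]=[0,d_X(x,f(s))]$ sending $0$ to $x$ and $s$ to the common point $f(s)=g(s)$; the uniqueness clause of Proposition~\ref{p:unicon}(3) then forces these restrictions to coincide, so $[0,s]\subseteq T$. Setting $u\defeq f(t_0)$ for $t_0=\max T$, the restriction $f|_{[0,t_0]}$ is the unique $1$-Lipschitz isometric parametrization of $[x,u]$, hence
$$[x,y]\cap[x,z]=f\bigl[[0,t_0]\bigr]=[x,u].$$

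Uniqueness of $u$ is a short routine check: if $[x,u_1]=[x,u_2]$, then by definition of metric segment we have both $u_1\in[x,u_2]$ and $u_2\in[x,u_1]$, which together quickly force $d_X(u_1,u_2)=0$.

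The main obstacle --- and the only place where the full strength of $\IR$-convexity enters --- is the initial-segment property of $T$, which relies on the uniqueness (not merely the existence) of the $1$-Lipschitz path between two points given by Proposition~\ref{p:unicon}.
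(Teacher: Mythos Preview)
Your proof is correct and follows essentially the same strategy as the paper: both arguments parametrize $[x,y]$ and $[x,z]$ by the unique isometries $\alpha,\beta$ (your $f,g$) starting at $x$, identify the intersection with an initial segment of the parameter interval, and use the uniqueness clause of Proposition~\ref{p:unicon}(3) applied to the restrictions $\alpha|_{[0,s]}$ and $\beta|_{[0,s]}$ to pin down that segment. The only cosmetic difference is ordering: the paper first takes $s=\max\,\alpha^{-1}\bigl[[x,z]\bigr]$ and then invokes uniqueness to see that $\alpha|_{[0,s]}=\beta|_{[0,s]}$, whereas you first prove that your agreement set $T$ is a closed initial interval and then take its maximum; since $T=\alpha^{-1}\bigl[[x,z]\bigr]$ (any $p\in[x,y]\cap[x,z]$ satisfies $p=f(d_X(x,p))=g(d_X(x,p))$), these are two presentations of the same argument.
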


\begin{proof} Since the metric space $X$ is $\IR$-convex, there exist isometries $\alpha:[0,d_X(x,y)]\to[x,y]$ and $\beta:[0,d_X(x,z)]\to [x,z]$ such that $\alpha(0)=x=\beta(0)$, $\alpha(d_X(x,y))=y$, and $\beta(d_X(x,z))=z$. By the compactness of $[x,z]$, the closed set $\alpha^{-1}[x,z]$ in $[0,d_X(x,y)]$ has the largest element $s$. Since $\alpha$ is an isometry, $s=d_X(\alpha(0),\alpha(s))=d_X(x,\alpha(s))$. Let $u=\alpha(s)$ and  $t=\beta^{-1}(u)$. Since $\beta$ is an isometry, $t=d_X(\beta(0),\beta(t))=d_X(x,u)=s$. Observe that $\alpha'\defeq \alpha{\restriction}_{[0,s]}:[0,s]\to X$ and $\beta'\defeq \beta{\restriction}_{[0,t]}:[0,t]\to X$ are two $1$-Lipschitz maps  from the interval $[0,s]=[0,t]$ to $X$ such that such that $\alpha'(0)=x=\beta'(0)$ and $\alpha'(s)=u=\beta'(t)$.  Proposition~\ref{p:unicon} ensures that $\alpha'=\beta'$ and hence $[x,y]\cap[x,z]=\alpha[[0,s]]=[x,u]$. The uniqueness of the point $u$ implies from the uniqueness of the end-points of the segment $[x,u]=[x,y]\cap[x,z]$.
\end{proof}

We observe that a metric space $X$ is $\IR$-tree if $X$ is an $\IR$-convex metric space such that for every points $x,y,z\in X$ with $[x,y]\cap[y,z]=\{y\}$, we have $d_X(x,z)=d_X(x,y)+d_X(y,z)$.

The following important fact can be found in \cite{K}. 

\begin{proposition}\label{Rtree} Every complete $\IR$-tree is an injective metric space.
\end{proposition}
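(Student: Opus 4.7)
The plan is to verify condition (4) of the characterization theorem at the start of the paper: I will show that every complete $\IR$-tree $X$ is metrically convex and has the binary intersection property.

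Metric convexity is immediate. Given $a,b\in X$ and $t\in[0,1]$, the metric segment $[a,b]$ is by hypothesis isometric to $[0,d_X(a,b)]$ via an isometry $\phi$ with $\phi(0)=a$ and $\phi(d_X(a,b))=b$; the point $x\defeq\phi(t\cdot d_X(a,b))$ has $d_X(a,x)=t\cdot d_X(a,b)$ and $d_X(x,b)=(1-t)\cdot d_X(a,b)$.

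The substance lies in the binary intersection property. Let $(B[x_i,r_i])_{i\in I}$ be a family of pairwise intersecting closed balls in $X$; the triangle inequality yields $d_X(x_i,x_j)\le r_i+r_j$ for all $i,j\in I$. A key preliminary is the \emph{median property} of $\IR$-trees: for any three points $u,v,w\in X$, the intersection $[u,v]\cap[v,w]\cap[u,w]$ is a single point $m(u,v,w)$. This follows from the previous proposition about intersections of segments in $\IR$-convex spaces, combined with the $\IR$-tree axiom that $[u,v]\cap[v,w]=\{v\}$ implies $d_X(u,w)=d_X(u,v)+d_X(v,w)$.

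I would handle the finite case by induction on $|I|$. For $|I|=2$, the unique point of $[x_1,x_2]$ at distance $\min(r_1,d_X(x_1,x_2))$ from $x_1$ lies in both balls. For the inductive step, given a common point $p$ of the first $n$ balls and an $(n+1)$-st ball $B[x_{n+1},r_{n+1}]$ not containing $p$, I would slide $p$ along $[p,x_{n+1}]$ by the distance $d_X(p,x_{n+1})-r_{n+1}$ needed to enter $B[x_{n+1},r_{n+1}]$; the median property is then used to verify that the slid point remains in each of the first $n$ balls.

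For arbitrary $I$, completeness is used as follows. For each finite $F\subseteq I$, let $K_F\defeq\bigcap_{i\in F}B[x_i,r_i]$, which is nonempty by the finite case and is a closed, geodesically convex subtree. The family $\{K_F\}$ is downward directed. To extract a common point of all the $K_F$, I would either (a) show that each $K_F$ has a well-defined circumcenter $c_F$ in $X$ and prove that $\{c_F\}$ is a Cauchy net by tree geometry, or (b) minimize the functional $f(y)\defeq\sup_{i\in I}\max(0,d_X(x_i,y)-r_i)$ along a selected sequence of near-minimizers shown to be Cauchy.

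The main obstacle is this last passage from finite to infinite: completeness alone is insufficient because the $K_F$ need not shrink in diameter. The decisive tool is that complete $\IR$-trees are $0$-hyperbolic, so any two ``almost-centers'' of a bounded geodesically convex subtree are forced to be close; making this estimate quantitative yields the required Cauchy property and closes the argument.
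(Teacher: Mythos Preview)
The paper does not prove this proposition at all; it simply cites Kirk's paper \cite{K}. So there is no argument in the paper to compare your attempt against, and your proposal is more ambitious than what the paper supplies. Your reduction to condition (4) (metric convexity plus the binary intersection property) is the right strategy, the metric-convexity step is correct, and your inductive treatment of \emph{finite} families of balls works: the median calculation you sketch does show that sliding $p$ toward $x_{n+1}$ by exactly $d_X(p,x_{n+1})-r_{n+1}$ keeps the new point inside every earlier ball.

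The genuine gap is precisely where you flag it: the passage from finite to arbitrary $I$. Neither of your two options is actually executed, and the final paragraph about $0$-hyperbolicity and ``almost-centers'' is a gesture, not an argument---you do not define the circumcenter of a possibly unbounded convex subtree, nor do you give the quantitative estimate that would make either net Cauchy. A clean way to finish: fix $i_0\in I$ and restrict to finite $F\ni i_0$, so each $K_F\defeq\bigcap_{i\in F}B[x_i,r_i]$ is a nonempty closed bounded convex subtree of the complete $\IR$-tree. Let $p_F$ be the unique nearest point of $K_F$ to $x_{i_0}$; by the gate property of convex subtrees in $\IR$-trees, for $F\subseteq F'$ the segment $[x_{i_0},p_{F'}]$ passes through $p_F$, whence $d_X(p_F,p_{F'})=d_X(x_{i_0},p_{F'})-d_X(x_{i_0},p_F)$. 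The net $\big(d_X(x_{i_0},p_F)\big)_F$ is monotone and bounded by $r_{i_0}$, hence Cauchy, so $(p_F)_F$ is Cauchy and its limit lies in every $K_F$.
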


\section{Lipschitz connected metric spaces}

In this section we study Lipschitz connected and $L$-Lipschitz connected {metric} spaces in more details.

\begin{definition} Let $L$ be a real number. A metric space $X$ is called {\em $L$-Lipschitz connected} if for every points $x,y\in X$, there exists an $L$-Lipschitz path $f:[0,d_X(x,y)]\to X$ such that $f(0)=x$ and $f(d_X(x,y))=y$.
\end{definition}

If an $L$-Lipschitz connected metric space contains more than one point, then $L\ge 1$. Therefore, the $1$-Lipschitz connectedness is the strongest notion among the $L$-Lipschitz connectedness properties. 

\begin{example}\label{ex:injective=>Lc} Every injective metric space $X$ is $1$-Lipschitz connected.
\end{example}

\begin{proof} Given any point $x,y\in X$, consider the doubleton $\{0,d_X(x,y)\}$ in the real line and the $1$-Lipschitz map $f:\{0,d_X(x,y)\}\to X$ defined by $f(0)=x$ and $f(d_X(x,y))=y$. By the injectivity of $X$, this $1$-Lipschitz map admits a $1$-Lipschitz extension $\bar f:[0,d_X(x,y)]\to X$, which is a $1$-Lipschitz path witnessing that $X$ is $1$-Lipschitz connected.
\end{proof}




As we have mentioned in the introduction, every Lipschitz connected metric space $X$ carries the intrinsic metric 
$d_{IX}:X\times X\to \IR$ defined by 
$$
\begin{aligned}
d_{IX}(x,y)&=\inf\{\Lip(f):f\in X^{[0,1]}\;\wedge\; f(0)=x\;\wedge\;f(1)=y\}\\
&=\inf\{C\in[0,\infty):\exists f\in X^{[0,C]}\;(f(0)=x\;\wedge\;f(C)=y\;\wedge\;\Lip(f)\le 1)\}
\end{aligned}
$$ 
for any points $x,y\in X$. The second equality in the definition of the intrinsic metric holds because a function $f\in X^{[0,1]}$ is $L$-Lipschitz if and only if the function $f_L:[0,L]\to X$, $f_L:t\mapsto f(t/L)$, is $1$-Lipschitz.

The definition of the intrinsic metric implies that $d_X\le d_{IX}$ for every Lipschitz connected metric space $X$. 
Also any $1$-Lipschitz map $f:X\to Y$ between Lipschitz connected metric spaces remains $1$-Lipschitz with respect to the intrinsic metrics on $X$ and $Y$, which means that $$d_{IY}(f(x),f(x'))\le d_{IX}(x,x')$$ for every points $x,x'\in X$. 

The following lemma describes an interplay between the completions of a Lipschitz connected metric space with respect to the intrinsic metric and the original metric.

\begin{lemma}\label{l:preimage} Let $X$ be a Lipschitz connected metric space, $\overline X$ be the completion of $X$, $\overline X_I$ be the completion of the metric space $X_I\defeq(X,d_{IX})$, and $\overline \id:\overline X_I\to \overline X$ be the unique continuous extension of the identity map $\id:X_I\to X$. Then $\overline \id^{-1}[X]=X_I$. In other words, every Cauchy sequence $(x_n)_{n \in \N}$ in $(X,d_{IX})$, which is convergent to a point $x$ in $(X,d_X)$ is convergent to $x$ in $(X,d_{IX}),$ too.
\end{lemma}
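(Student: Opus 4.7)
The plan is to focus on the non-trivial inclusion $\overline{\id}^{-1}[X]\subseteq X_I$; the reverse inclusion is immediate because $\overline{\id}$ restricts to the identity on $X_I$. I would pick $\tilde x\in \overline X_I$ with $x\defeq\overline{\id}(\tilde x)\in X$ and choose a $d_{IX}$-Cauchy sequence $(x_n)_{n\in\N}$ in $X$ converging to $\tilde x$ in $\overline X_I$. Because $d_X\le d_{IX}$, this sequence is also $d_X$-Cauchy and therefore $d_X$-converges to $\overline{\id}(\tilde x)=x$. The statement thus reduces to its equivalent second-sentence reformulation: \emph{any $d_{IX}$-Cauchy sequence $(x_n)_{n\in\N}$ in $X$ that $d_X$-converges to a point $x\in X$ must also $d_{IX}$-converge to $x$}; this forces $\tilde x=x\in X_I$.

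To prove this, I would fix $\varepsilon>0$ and choose positive numbers $\delta_k$ with $\sum_{k\ge 1}\delta_k<\varepsilon$. Using the $d_{IX}$-Cauchy property, pick indices $N_1<N_2<\cdots$ such that $d_{IX}(x_i,x_j)<\delta_k$ for all $i,j\ge N_k$. Given any $n\ge N_1$, I build a subsequence by $n_1=n$ and $n_{k+1}=\max\{n_k,N_{k+1}\}+1$, so that $d_{IX}(x_{n_k},x_{n_{k+1}})<\delta_k$ for every $k$. The definition of $d_{IX}$ then supplies a $1$-Lipschitz map $g_k:[0,\delta_k]\to X$ with $g_k(0)=x_{n_k}$ and $g_k(\delta_k)=x_{n_{k+1}}$ (by rescaling a near-optimal Lipschitz path on $[0,1]$). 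Writing $s_0=0$, $s_k=\sum_{j=1}^{k}\delta_j$, and $s_\infty=\sum_{k\ge 1}\delta_k<\varepsilon$, I glue the $g_k$ into a single map $g:[0,s_\infty]\to X$ by $g(t)=g_k(t-s_{k-1})$ on $[s_{k-1},s_k]$ and $g(s_\infty)=x$.

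The main technical step is verifying that $g$ is $1$-Lipschitz on the whole of $[0,s_\infty]$; this is the delicate point, because we simultaneously glue infinitely many pieces and adjoin a limit endpoint whose value $g(s_\infty)=x$ is supplied from outside the subsequence rather than by any $g_k$. For $0\le t_1<t_2<s_\infty$ a routine telescoping estimate through the pivots $g(s_k)=x_{n_{k+1}}$, combined with the $1$-Lipschitz property of each $g_k$, yields $d_X(g(t_1),g(t_2))\le t_2-t_1$. For $t<s_\infty$, I would observe that whenever $t'\in[s_{k-1},s_k]$ one has $d_X(g(t'),x_{n_k})\le \delta_k$, so $d_X(g(t'),x)\le \delta_k+d_X(x_{n_k},x)\to 0$ as $t'\to s_\infty^-$ (both $\delta_k\to 0$ and $x_{n_k}\to x$ in $d_X$, the latter since $(x_n)$ $d_X$-converges to $x$); passing to the limit in $d_X(g(t),x)\le(t'-t)+d_X(g(t'),x)$ gives $d_X(g(t),g(s_\infty))\le s_\infty-t$. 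Consequently $g$ witnesses $d_{IX}(x_n,x)\le s_\infty<\varepsilon$ for every $n\ge N_1$, which establishes $d_{IX}$-convergence of $(x_n)$ to $x$ and completes the proof.
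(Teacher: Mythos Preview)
Your proof is correct and follows essentially the same strategy as the paper: concatenate infinitely many short $1$-Lipschitz paths between consecutive terms of a rapidly $d_{IX}$-Cauchy (sub)sequence into a single $1$-Lipschitz path whose endpoint value is supplied by the $d_X$-limit $x$, thereby witnessing that $d_{IX}(x_n,x)$ is small. The only cosmetic difference is that the paper fixes the geometric weights $\delta_k=2^{-(k+1)}$ once and builds a single path $\gamma:[0,1]\to X$ (so that $d_{IX}(x_n,\overline{\id}(x))\le 2^{-n}\to 0$ directly), whereas you redo the construction for each $\varepsilon$ with generic summable $\delta_k$; the underlying idea and the key endpoint-continuity argument are identical.
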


\begin{proof} Given any point $x\in \overline X_I$ with $\overline \id(x)\in X$, we need to prove that $x\in X_I$. Since $x\in\overline X_I$, there exists a sequence $(x_n)_{n\in\N}$ in $X_I$ such that $d_{IX}(x,x_n)<\frac1{2^{n+2}}$ for all $n\in\N$. For every $n\in\N$ we have $$d_{IX}(x_n,x_{n+1})\le d_{IX}(x_n,x)+d_{IX}(x,x_{n+1})<\tfrac1{2^{n+2}}+\tfrac1{2^{n+3}}<\tfrac1{2^{n+1}}.$$ So, we can find a 1-Lipschitz path $\gamma_n:[\tfrac1{2^{n+1}},\tfrac1{2^n}]\to X$ such that $\gamma_n(\tfrac1{2^{n+1}})=x_{n+1}$ and $\gamma_n(\tfrac1{2^n})=x_n$. The paths $\gamma_n$, $n\in\N$, compose a 1-Lipschitz path $\gamma:[0,1]\to X$ defined by $\gamma(0)=\overline \id(x)$ and $\gamma{\restriction}_{[\tfrac1{2^{n+1}},\tfrac1{2^n}]}=\gamma_n$ for all $n\in\N$. The definition of $\gamma$ guarantees that $$\lim_{n\to\infty}d_{IX}(x,x_n)=0=\lim_{n\to\infty}d_{IX}(x_n,\id^{-1}(\overline \id(x)))$$ and hence $x=\id^{-1}(\overline \id(x))\in \id^{-1}[X]=X_I$.
\end{proof}

A metric $d$ on a set $X$ will be called {\em compact} (resp. {\em complete}, {\em separable}, {\em analytic}) if so is the metric space $(X,d)$.

\begin{lemma} \label{d vs dp} Let $(X,d_X)$ be a Lipschitz connected metric space.
\begin{enumerate}
\item[\textup{(1)}] \label{trudne} If the metric $d_X$ is complete, then so is the intrinsic metric $d_{IX}$.
\item[\textup{(2)}]	The metric $d_{IX}$ is analytic if and only if $d_{IX}$ is separable and the metric $d_X$ is analytic.
\item[\textup{(3)}] If the metric $d_{IX}$ is separable (resp. totally bounded, compact), then so is the metric $d_X$.
\end{enumerate}
\end{lemma}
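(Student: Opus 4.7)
The plan: parts (3) and (1) both follow easily from the basic inequality $d_X \le d_{IX}$ combined with Lemma \ref{l:preimage}; part (2) requires a short descriptive set theoretic argument that again rests on Lemma \ref{l:preimage}.

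For (3), the inequality $d_X \le d_{IX}$ says precisely that the identity map $(X, d_{IX}) \to (X, d_X)$ is $1$-Lipschitz, hence a uniformly continuous surjection. Continuous surjections preserve separability and compactness, and uniformly continuous surjections preserve total boundedness, so each of the three implications in (3) is immediate.

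For (1), take any Cauchy sequence $(x_n)_{n \in \N}$ in $(X, d_{IX})$. Since $d_X \le d_{IX}$, it is Cauchy in $(X, d_X)$ as well, and completeness of $d_X$ yields a limit $x \in X$ in $(X, d_X)$. Lemma \ref{l:preimage} then upgrades this to convergence $x_n \to x$ in $(X, d_{IX})$, which is exactly the assertion that $d_{IX}$ is complete.

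For (2), the forward implication is essentially automatic: every analytic space is separable (as a continuous image of a Polish space), and the identity $(X, d_{IX}) \to (X, d_X)$ is continuous, so the analyticity of $d_{IX}$ passes to $d_X$. For the converse, assume $d_{IX}$ is separable and $d_X$ is analytic. Part (3) gives separability of $d_X$ as well, so both completions $\overline X$ and $\overline X_I$ are Polish. Writing $(X, d_X)$ as a continuous image of a Polish space and composing with the inclusion into $\overline X$ shows that $X$ is an analytic subset of the Polish space $\overline X$. By Lemma \ref{l:preimage}, $X_I = \overline{\id}^{\,-1}(X)$; since continuous preimages of analytic subsets of Polish spaces are analytic, $X_I$ is analytic in $\overline X_I$, so $(X, d_{IX})$ is an analytic metric space. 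The only ingredient here that goes beyond results already proved in the paper is this standard stability property of analytic sets, which I expect to be the only point requiring an external reference — everything else is either a one-line consequence of $d_X \le d_{IX}$ or a direct invocation of Lemma \ref{l:preimage}.
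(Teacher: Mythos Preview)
Your proof is correct and follows essentially the same route as the paper's: part (3) via the $1$-Lipschitz identity $(X,d_{IX})\to(X,d_X)$, part (1) by lifting a $d_{IX}$-Cauchy sequence to a $d_X$-limit and invoking Lemma~\ref{l:preimage}, and part (2) via the equality $X_I=\overline{\id}^{\,-1}[X]$ from Lemma~\ref{l:preimage} together with preservation of analyticity under continuous preimages between Polish spaces (the paper cites Kechris~\cite[14.4]{Ke} for precisely this fact).
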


\begin{proof} (1) Suppose that the metric $d_X$ is complete and take a Cauchy sequence $(x_n)_{n \in \N}$ in the metric space $(X,d_{IX}).$ Since $d_X\le d_{IX}$, the sequence  $(x_n)_{n \in \N}$ remains Cauchy in the metric space $(X,d_{X})$. Since $(X,d_X)$ is complete,  this sequence converges to some point $x \in X$ in the metric space $(X,d_X)$. Lemma \ref{l:preimage} guarantees that $\lim_{n \to \infty} x_n=x$ in $(X,d_{IX})$ too, witnessing that the intrinsic metric $d_{IX}$ is complete.
\smallskip

(2) If the metric $d_{IX}$ is analytic, then $d_{IX}$ is separable and $d_X$ is analytic, because of the continuity of the identity map $(X,d_{IX})\to (X,d_X)$.

Next, assume that the metric $d_{IX}$ is separable and $d_X$ is analytic. Let $\overline X$ be the completion of the metric space $(X,d_X)$ and $\overline X_I$ be the completion of the metric space $X_I\defeq(X,d_{IX})$. Let $\bar I:\overline X_I\to\overline X$ be the continuous extension of the (1-Lipschitz) identity map $I:X_I\to X$. By Lemma~\ref{l:preimage}, $X_I=\bar I^{-1}[X]$. Since the spaces $\overline X_I$ and $\overline X$ are Polish, the preimage $X_I=\bar I^{-1}[X]$ is an analytic subspace of $\overline X_I$, by \cite[14.4]{Ke}.
\smallskip

The statement (3) follows from the $1$-Lipschitz property of the identity map $(X,d_{IX})\to(X,d_{IX})$.
\end{proof}

The following simple example shows that the implications of Lemma \ref{d vs dp} cannot be reversed in a general case.

	\begin{example} \label{e: d vs dp} Let $L$ be any line in the complex plane $\IC$ and $o\in\mathbb C\setminus L$ be any point. For every uncountable compact zero-dimensional set $K\subseteq L$, the cone $$X:=\bigcup_{x\in K}[o,x]$$over $K$ with vertex $o$ is a Lipschitz connected compact (by \cite[3.2.11]{E}) metric space with respect to the metric $d_X$ inherited from the complex plane. Thus $(X,d_X)$ is also separable and totally bounded. Since $d_{IX}(x,y)\ge 2\cdot \dist(o,L)$  for any distinct points $x,y\in K$, the metric space $(X,d_{IX})$ is not separable (and hence not totally bounded and not compact). 
		
	If the compact space $K$ has no isolated points, then for every countable dense set $D\subseteq K$, the Lipschitz connected metric subspace $Y=\bigcup_{x\in D}[o,x]$ of $X$ is not complete (by the Baire Theorem) whereas the intrinsic metric $d_{IY}$ is $\sigma$-compact and complete.
	\end{example}
	

\begin{example} The metric subspace $X=\{(x,\sin(\frac{1}{x})) \colon x>0\}$ of the Euclidean plane is Lipschitz connected and not complete whereas the intrinsic metric on $X$ is complete.
\end{example}

	

\section{The $\IR$-trees generated by weighted trees}

In this section we describe a general construction of a (complete) $\IR$-tree, determined by a weighted tree.

\begin{definition} A {\em forest} is a partially ordered set $(T,\preceq)$  such that for every $t\in T$ the set $${\downarrow}t\defeq \{x\in T:x\preceq t\}$$ is finite and linearly ordered. 
\end{definition}

 For any elements $s,t\in T$, the largest element of the linearly ordered set ${\downarrow}s\cap{\downarrow}t$ is called the {\em infimum} of $s$ and $t$ and is denoted by $s\wedge t$.

For an element $t$ of a forest $T$, the set $\mathrm{succ}(t)\defeq\min \{s\in T:t\prec s\}$ is called the set of {\em immediate successors} of $t$ in the forest. 

A subset $B$ of a forest $T$ is called a {\em branch} of $T$ if $B$ is a maximal linearly ordered subset of $T$. 
The {\em boundary} $\partial T$ of a forest $T$ is the set of all infinite branches in $T$. 

For a forest $T$ by $\min T$ denote the set of minimal elements of $T$. For a nonempty forest $T$, the set $\min T$ is not empty. A  forest $T$ is a {\em tree} if the set $\min T$ is a singleton, consisting of the smallest element of $T$, called the {\em root} of the tree $T$ and denoted by $r_T$.

Observe that for every element $r$ of a forest $T$, the set 
$${\uparrow}r\defeq\{x\in T:r\preceq x\}$$ is a tree with root $r$.

\begin{definition} A {\em  weighted tree} is a pair $(T, w)$ consisting of a tree $T$ 
and a function $w:T\to[0,+\infty)$ assigning to every vertex $t\in T$ its {\em weight} $w_t=w(t)$ so that $w_t=0$ if and only if $t$ is the root of $T$.
\end{definition}

Every weighted tree $(T,w)$ determines an $\IR$-tree  $$T_w=(\{r_T\}\times\{0\})\cup\bigcup_{t\in T}\{t\}\times (0,w_t],$$endowed with the  metric
$$d_w((t,t'),(s,s')):=\begin{cases}\;|t'-s'|&\mbox{if $s=t$};\\
t'-w_t+s'-w_s+\sum_{x\in {\downarrow}s\triangle {\downarrow}t}w_x&\mbox{otherwise},\\
\end{cases}$$
where $A\triangle B$ stands for the symmetric difference $(A\setminus B)\cup(B\setminus A)$ of the sets $A,B$. It is easy to check that the metric space $(T_w,d_w)$ is indeed an $\IR$-tree. The $\IR$-tree $T_w$ will be called the {\em $\IR$-tree of the weighted tree $(T,w)$}.

The $\IR$-tree $T_w\subset T\times \R$ carries a natural  lexicographic order
	$$(s,s') \leq_L (t,t')\quad\Leftrightarrow \quad s\prec t \;\vee\; (s=t \;\wedge\; s'\leq t').$$ So, for every element $t''\in T_w$ we can consider its  lower and upper sets
$${\downarrow}t''\defeq\{s''\in T_w:s''\le_L t''\}\quad\mbox{and}\quad{\uparrow}t''\defeq\{s''\in T_w:t''\le_L s''\}$$
in the partially ordered set $(T_w,\le_L)$. Since $T$ is a tree,  the lower set ${\downarrow}t''$ of every element $t''\in T_w$ is linearly ordered. For two elements $a'',b''\in T_w$, let $a''\wedge b''$ denote the largest element of the linearly ordered set ${\downarrow}a''\cap{\downarrow}b''$ in the partially ordered set $(T_w,\le_L)$. 

Now we shall characterize weighted trees whose $\IR$-trees are totally bounded or separable.

\begin{definition} A weighted tree $(T,w)$ is defined to be {\em precompact} if 
for every $\e>0$ there exists a finite set $F\subseteq T$ such that for every finite linearly ordered set $L\subseteq T\setminus F$ we have $\sum_{x\in L}w_x<\e$.
\end{definition}

\begin{proposition}\label{calzup} A weighted tree $(T,w)$ is precompact if and only if the metric space $T_w$ is totally bounded.
\end{proposition}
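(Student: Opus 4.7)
I will use throughout that $T_w$ is an $\IR$-tree with vertex structure inherited from $T$, so the unique geodesic between any two points passes through their meet in the tree order $\preceq$.

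\smallskip

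\emph{($\Leftarrow$) Precompactness implies total boundedness.} Fix $\varepsilon>0$ and apply precompactness with bound $\varepsilon/2$ to obtain a finite $F\subseteq T$; enlarging $F$ if necessary, assume $F$ is downward-closed and contains $r_T$. On each of the finitely many edges $\{t\}\times(0,w_t]$ with $t\in F$ place a finite $\varepsilon/2$-net, and adjoin $(r_T,0)$; call the resulting finite set $N\subseteq T_w$. Points $(t,t')$ with $t\in F$ are covered directly. For $(t,t')$ with $t\notin F$, let $r_0$ be the largest ancestor of $t$ in $F$ (which exists since $r_T\in F$); the chain $C\defeq\{y:r_0\prec y\preceq t\}$ lies in $T\setminus F$ by downward-closedness, so $\sum_{y\in C}w_y<\varepsilon/2$. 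The geodesic in $T_w$ from $(t,t')$ to the vertex corresponding to $r_0$ is the parent-ward segment on $\{t\}\times(0,w_t]$ followed by the chain of whole edges from the parent of $t$ up to $r_0$, of total length $t'+\sum_{y:r_0\prec y\prec t}w_y\le\sum_{y\in C}w_y<\varepsilon/2$. Since that vertex is within $\varepsilon/2$ of $N$, the point $(t,t')$ is within $\varepsilon$ of $N$.

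\smallskip

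\emph{($\Rightarrow$) Total boundedness implies precompactness.} I argue the contrapositive. Suppose $(T,w)$ is not precompact, witnessed by $\varepsilon>0$, and suppose for contradiction $T_w$ has a finite $\varepsilon/3$-net $N$. Set $F\defeq{\downarrow}\{t\in T:(t,t')\in N\text{ for some }t'\}\cup\{r_T\}$, a finite, downward-closed subset of $T$. By non-precompactness choose a finite chain $L=l_1\prec\cdots\prec l_m$ in $T\setminus F$ with $\sum_{i=1}^m w_{l_i}\ge\varepsilon$, and put $p\defeq(l_m,w_{l_m})\in T_w$. Let $r_0$ be the largest ancestor of $l_m$ in $F$; since no $l_i$ lies in $F$, we obtain $r_0\prec l_1\preceq\cdots\preceq l_m$. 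For any $n=(t,t')\in N$, the meet $t\wedge l_m$ lies in $F$ (downward-closure and $t\in F$), hence $t\wedge l_m\preceq r_0$. The geodesic in $T_w$ from $n$ to $p$ visits the vertex $t\wedge l_m$ and then descends along the unique chain of edges to $l_m$, contributing length
\[
\sum_{y:(t\wedge l_m)\prec y\preceq l_m}w_y\;\ge\;\sum_{y:r_0\prec y\preceq l_m}w_y\;\ge\;\sum_{i=1}^m w_{l_i}\;\ge\;\varepsilon.
\]
Hence $d_w(n,p)\ge\varepsilon$, contradicting the $\varepsilon/3$-net property of $N$.

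\smallskip

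The main obstacle is justifying the $\IR$-tree distance decompositions used in both directions: in each case one must write a geodesic as an initial interval-segment on an edge $\{t\}\times(0,w_t]$ followed (or preceded) by a union of whole edges along a chain in $T$. Once this geometry is granted, the forward direction reduces to covering the finitely many bounded edges of $F$ by finite nets, while the reverse direction forces the sum $\sum_{i=1}^m w_{l_i}$ to appear as a lower bound on $d_w(n,p)$ for every net point $n$, producing the desired contradiction.
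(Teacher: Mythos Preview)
Your proof is correct and follows essentially the same route as the paper: in both directions you pass to a finite downward-closed set $F\subseteq T$, compare an arbitrary point of $T_w$ to the top vertex $(r_0,w_{r_0})$ of its highest $F$-ancestor, and bound the geodesic distance by the weight-sum along the connecting chain. The only cosmetic differences are that the paper proves ``totally bounded $\Rightarrow$ precompact'' directly (for a given $\varepsilon$, build $F$ from the net and bound every chain) whereas you argue the contrapositive, and that your $\varepsilon/3$ is more cautious than needed (any positive number below $\varepsilon$ works). Also note that your arrow labels are swapped: your ``$(\Leftarrow)$'' paragraph is actually the ``only if'' direction.
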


\begin{proof} To prove the ``if'' part, assume that the metric space $T_w$ is totally bounded. Then for every $\varepsilon>0$, there exists a finite set $F''\subseteq T_w$ such that for every $x''\in T_w$, there exists $y''\in F''$ with $d_w(x'',y'')<\e$. Since $F''$ is finite, there exists a finite set $F\subseteq T$ such that  $F''\subseteq\bigcup\{{\downarrow}(t,w_t):t\in F\}\subseteq T_w$. Replacing $F$ by the finite set ${\downarrow}F\defeq\bigcup_{t\in F}{\downarrow}t$, we can assume that $F={\downarrow}F$. We claim that $\sum_{x\in L}w_x<\e$ for every finite linearly ordered set $L\subseteq T\setminus F$. Since $L$ is finite and linearly ordered, there exists an element $u\in L$ such that $L\subseteq{\downarrow}u$. By the choice of the set $F''$, there exists a point $y''\in F''$ such that $d_w((u,w_u),y'')<\e$. Write the element $y''$ as $(y,y')$ for some $y\in T$ and $y'\in [0,w_y]$. The choice of the set $F$ ensures that $y\in F$ and hence $L\subseteq {\downarrow}u\setminus F\subseteq {\downarrow}u\setminus{\downarrow}y$. Then $$\sum_{x\in L}w_x\le \sum_{x\in {\downarrow}u\setminus{\downarrow}y}w_x\le d_w((u,w_u),y'')< \varepsilon,$$ witnessing that the weighted tree $(T,w)$ is precompact.
\smallskip

\begin{picture}(100,100)(-200,-20)
\put(0,0){\line(1,1){60}}
\put(0,0){\line(-1,1){30}}
\put(-45,45){\circle*{2}}
\put(-50,50){\circle*{2}}
\put(-55,35){$L$}
\put(-40,40){\circle*{2}}
\put(0,0){\circle*{3}}

\put(30,30){\circle*{3}}
\put(60,60){\circle*{3}}
\put(-97,58){$(u,w_u)$}
\put(-30,30){\circle*{3}}
\put(63,58){$(y,w_y)$}
\put(-60,60){\circle*{3}}
\put(40,40){\circle*{2}}
\put(43,35){$y''=(y,y')\in F''$}
\put(-30,-10){$(u\wedge y, w_{u\wedge y})$}
\end{picture}

To prove the ``only if'' part, assume that the weighted tree $(T,w)$ is precompact. To prove that the metric space $T_w$ is totally bounded, fix any $\e>0$. By the precompactness of the weighted tree $(T,w)$, there exists a nonempty finite set $F\subseteq T$ such that $\sum_{x\in L}w_x<\e$ for every finite linearly ordered set $L\subseteq T\setminus F$. Replacing $F$ with the finite set ${\downarrow}F\defeq\bigcup_{t\in F}{\downarrow}t$, we can assume that $F={\downarrow}F$. Since the set $F$ is finite, the set $$K\defeq{\textstyle\bigcup}\{{\downarrow}(t,w_t):t\in F\}=(\{r_T\}\times\{0\})\cup\bigcup_{t\in F}(\{t\}\times(0,w_t])$$is compact and hence totally bounded in the metric space $T_w$. Then there exists a finite set $F''\subseteq K$ such that for every $x''\in K$ there exists $y''\in F''$ with $d_w(x'',y'')<\e$. Replacing $F''$ by the set $F''\cup \{(t,w_t):t\in F\}$, we can assume that $\{(t,w_t):t\in F\}\subseteq F''$. We claim that for every $x''\in T_w$, there exists $y''\in F''$ such that $d_w(x'',y'')<\e$.

Indeed, if $x''\in K$, then the existence of a point $y''\in F''$ with $d_w(x'',y'')<\e$ follows from the choice of the set $F''$. So, we assume that $x''\notin K$.
In this case, $x''=(x,x')$ for some $x\in T\setminus F$ and $x'\in (0,w_x]$. Since $r_T\in {\downarrow}F=F$, there exists a point $y\in F\cap{\downarrow}x$ such that $F\cap{\downarrow}x={\downarrow}y$. The choice of the set $F''$ ensures that it contains the point $y''\defeq (y,w_y)$. The choice of $F$ guarantees that $\sum_{t\in{\downarrow}x\setminus {\downarrow}y}w_t=\sum_{t\in{\downarrow}x\setminus F}w_t<\e$ and hence  
$d_w(x'',y'')=x'-w_x+\sum_{t\in {\downarrow}x\setminus {\downarrow}y}w_t\le \sum_{t\in {\downarrow}x\setminus {\downarrow}y}w_t<\e,$ by the definition of the metric $d_w$. This shows that the metric space $T_w$ is totally bounded.
\end{proof}


\begin{proposition}\label{przel} For a weighted tree $(T,w)$, the $\IR$-tree $T_w$ is separable if and only if the tree $T$ is countable.
\end{proposition}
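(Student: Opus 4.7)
The plan is to exploit the explicit edge-decomposition $T_w=\{(r_T,0)\}\cup\bigcup_{t\in T\setminus\{r_T\}}\{t\}\times(0,w_t]$ together with the observation that the restriction of $d_w$ to any single edge $\{t\}\times(0,w_t]$ reduces to the Euclidean formula $|t'-s'|$. For the easy ``if'' direction I would assume $T$ is countable, pick a countable dense set $D_t\subseteq(0,w_t]$ for every non-root $t$, and verify that the countable set $D\defeq\{(r_T,0)\}\cup\bigcup_{t\in T\setminus\{r_T\}}\{t\}\times D_t$ is dense in $T_w$. Density is immediate: every point of $T_w$ lies on some edge (or equals $(r_T,0)$), and on each edge the set $D$ is dense in the Euclidean metric.

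For the ``only if'' direction I would argue by contraposition: assume $T$ is uncountable, so $T\setminus\{r_T\}$ is uncountable. For each non-root $t$ put $U_t\defeq\{t\}\times(0,w_t)$, which is non-empty since $w_t>0$, and the family $\{U_t\}_{t\in T\setminus\{r_T\}}$ is pairwise disjoint by the first coordinate. The key step is to show that each $U_t$ is open in $T_w$. Fixing $(t,t')\in U_t$ and any point $y\in T_w\setminus(\{t\}\times(0,w_t])$, an inspection of $d_w$ yields $d_w((t,t'),y)\ge\min\{t',w_t-t'\}>0$, because every path in $T_w$ from $(t,t')$ to $y$ must exit the edge $\{t\}\times(0,w_t]$ through one of its two endpoints, paying at least $t'$ to exit at the parent side or $w_t-t'$ to exit at the top $(t,w_t)$. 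The open ball of radius $\tfrac12\min\{t',w_t-t'\}$ around $(t,t')$ therefore lies entirely inside $U_t$, proving openness. Since a separable metric space cannot contain an uncountable family of pairwise disjoint non-empty open sets, this finishes the argument.

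The main obstacle is the openness verification, which requires inspecting the piecewise definition of $d_w$ case by case according to the relative position of the first coordinate of $y$ with respect to $t$ (parent, ancestor, descendant, sibling, or incomparable), checking in each case that the distance from $(t,t')$ to $y$ is bounded below by $\min\{t',w_t-t'\}$. Once this lower bound is established, both implications are routine.
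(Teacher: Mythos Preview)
Your proof is correct and follows essentially the same approach as the paper: a countable dense set built edge-by-edge for the ``if'' direction (the paper uses $T_w\cap(T\times\mathbb{Q})$), and the uncountable family of pairwise disjoint non-empty open sets $\{t\}\times(0,w_t)$ for the ``only if'' direction. The only difference is that the paper simply asserts that ``the definition of metric $d_w$ ensures'' each $\{t\}\times(0,w_t)$ is open, whereas you supply the explicit lower bound $d_w((t,t'),y)\ge\min\{t',w_t-t'\}$; note, however, that your case analysis should also cover $y=(t,w_t)$, which lies outside $U_t$ but inside $\{t\}\times(0,w_t]$ --- there the distance is $w_t-t'$, so the ball of radius $\tfrac12\min\{t',w_t-t'\}$ still avoids it.
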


\begin{proof} If the tree $T$ is countable, then $T_w\cap(T\times\mathbb Q)$ is a countable dense set in the $\IR$-tree, witnessing that the metric space $T_w$ is separable. 

Next, assume that the tree $T$ is uncountable. The definition of metric $d_w$ ensures that for every node $t\in T$, the set $\{t\} \times (0,w_t)$ is open in the metric space $T_w$. Therefore, the metric space $T_w$ contains the uncountable family of pairwise disjoint open sets $\big\{\{t\}\times(0,w_t):t\in T\big\}$ and hence $T_w$ cannot be separable, according to \cite[Corollary 4.1.16]{E}. 
\end{proof}

Let $(T,w)$ be a weighted tree. In the set $\partial T$ of all infinite branches of $T$, consider the subset 
$$\partial T_w:=\{L\in\partial T:|L|=\infty,\;\sum_{t\in L}w_t<\infty\}.$$
On the set $\overline T_w:=T_w\cup\partial T_w$ we define a metric $\bar d_w$,
that extends the metric $d_w$ on $T_w$ as follows:
$$\bar d_w(x'',y''):=\begin{cases}\;
|x'-y'|&\mbox{if $x''=(x,x'), y''=(y,y')\in T_w$ and $x=y$};\\
x'-w_x+y'-w_y+\sum_{t\in {\downarrow}x\triangle {\downarrow}y} w_t&\mbox{if $x''=(x,x'), y''=(y,y')\in T_w$ and $x\ne y$};\\
x'-w_x+\sum_{t\in {\downarrow}x\triangle y''} w_t&\mbox{if $x''=(x,x')\in T_w$, $y''\in \partial T_w$ and $x\notin y''$ } ;\\
{w_x}-x'+\sum_{t\in {y''\setminus{\downarrow}x}} w_t&\mbox{if $x''=(x,x')\in T_w$, $ y''\in \partial T_w$ and $x\in y''$ } ;\\
\;\sum_{t\in x''\triangle y''} w_t & \mbox{if $x'',y''\in \partial T_w$}.
\end{cases}$$



\begin{proposition}\label{zup}
For every weighted tree $(T,w)$, the metric space $\overline T_w$ is a completion of the metric space $T_w$.
\end{proposition}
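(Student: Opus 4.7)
I will verify three things in turn: (i) $\bar d_w$ is a metric extending $d_w$, (ii) $T_w$ is dense in $(\overline T_w,\bar d_w)$, and (iii) every Cauchy sequence in $\overline T_w$ converges. Throughout, I rely on the geometric picture that $\overline T_w$ is obtained from the $\IR$-tree $T_w$ by appending, at each infinite branch $B\in\partial T_w$, a single ``point at the end of the ray'' whose distance from any interior point is the length of the unique geodesic in this extended tree.

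For (i), symmetry is immediate from the defining formulas. Positivity in the mixed case $x''=(x,x')\in T_w$, $y''\in\partial T_w$ with $x\notin y''$ is clear because $\sum_{t\in y''\setminus {\downarrow}x}w_t$ is the tail of the (positive-term) branch sum and dominates the nonpositive contribution from $x'-w_x$; the remaining case $x\in y''$ and the case $x'',y''\in\partial T_w$ with $x''\ne y''$ give a positive series as well. The triangle inequality follows directly from viewing $\bar d_w(x'',y'')$ as the length of the unique (possibly improper) geodesic in the geometric $\IR$-tree and noting that concatenation of paths always dominates the shortest-path distance; alternatively, one verifies it by a finite case analysis on which of the three points belong to $T_w$ versus $\partial T_w$, each case reducing to the identity $|A\triangle C|\le |A\triangle B|+|B\triangle C|$ applied to the sets ${\downarrow}x$ and branches involved, combined with the real-line triangle inequality on the interval coordinates.

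For (ii), fix $y''\in\partial T_w$ and enumerate the branch as $t_1\prec t_2\prec\cdots$. Since $\sum_{n}w_{t_n}<\infty$, the tails $\sum_{k>n}w_{t_k}$ tend to $0$. Applying the fourth case of the definition of $\bar d_w$ with $x=t_n$ and $x'=w_{t_n}$ gives
\[
\bar d_w\bigl((t_n,w_{t_n}),\,y''\bigr)=\sum_{t\in y''\setminus{\downarrow}t_n}w_t=\sum_{k>n}w_{t_k}\xrightarrow[n\to\infty]{}0,
\]
so $(t_n,w_{t_n})\in T_w$ converges to $y''$; this proves density.

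For (iii), let $(z_n)\subset\overline T_w$ be Cauchy. Using density, choose $y_n=(v_n,r_n)\in T_w$ with $\bar d_w(z_n,y_n)<2^{-n}$; then $(y_n)$ is Cauchy in $T_w$ and any limit of $(y_n)$ is also a limit of $(z_n)$. Put $A_n:={\downarrow}v_n$, a finite chain in $T$. The defining formula of $d_w$ implies that $\sum_{t\in A_m\triangle A_n}w_t\le d_w(y_m,y_n)+r_m+r_n\cdot(\text{small terms})$, so the common initial segment $A_\infty:=\bigcup_{n}\bigcap_{m\ge n}A_m$ is a well-defined linearly ordered subset of $T$. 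If $A_\infty$ is finite, then from some index on all the chains $A_n$ share a common top element $v$ and their excesses above $v$ carry vanishing weight; the $(y_n)$ then lie eventually in the compact subtree ${\downarrow}(v,w_v)\cup\{(s,w_s):s\in\operatorname{succ}(v)\}$ and converge to a point of $T_w$. If $A_\infty$ is infinite, it is an infinite branch; the Cauchy control on symmetric differences forces $\sum_{t\in A_\infty}w_t<\infty$, so $A_\infty\in\partial T_w$, and the fourth case of $\bar d_w$ gives $\bar d_w(y_n,A_\infty)\to 0$.

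The main obstacle is the bookkeeping in the infinite case of completeness: extracting the branch $A_\infty$ from the Cauchy data and simultaneously ensuring its total weight is finite, while maintaining that the residual pieces $A_n\setminus A_\infty$ and $A_\infty\setminus A_n$ both have vanishing weight as $n\to\infty$. The triangle inequality verification, though a multi-case computation, is conceptually routine once one adopts the geodesic $\IR$-tree viewpoint.
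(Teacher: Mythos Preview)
Your density argument (ii) is fine, and reducing an arbitrary Cauchy sequence in $\overline T_w$ to one in $T_w$ is legitimate. The genuine problem is in the finite case of (iii).

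Your claims there are simply false. Take the tree with root $r$, a single child $a$ with $w_a=1$, and children $b_1,b_2,\dots$ of $a$ each with $w_{b_n}=1$. Let $y_n=(b_n,1/n)$. Then $d_w(y_m,y_n)=1/m+1/n$, so $(y_n)$ is Cauchy with limit $(a,1)$. Here $A_n=\{r,a,b_n\}$ and $A_\infty=\{r,a\}$ with top $v=a$, but the ``excess'' $A_n\setminus{\downarrow}a=\{b_n\}$ has weight $1$ for every $n$, so excess weights do \emph{not} vanish. Moreover $y_n=(b_n,1/n)$ does not lie in your set ${\downarrow}(v,w_v)\cup\{(s,w_s):s\in\mathrm{succ}(v)\}$, and that set is not compact anyway (the points $(b_i,1)$ are pairwise at distance $2$). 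So the argument collapses. The issue is structural: $A_\infty=\liminf A_n$ can be strictly smaller than $\limsup A_n$, and your finite case tacitly assumes they agree.

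The paper handles this by working instead with the set $\Lambda=\{t\in T:\text{$t\preceq\check s_n$ for infinitely many $n$}\}$, built from the \emph{predecessors} $\check s_n$ rather than from $s_n$. When $\Lambda$ has a maximal element $\lambda$, the paper further splits according to whether some fixed $s\in\mathrm{succ}(\lambda)$ equals $s_n$ infinitely often; if so, a subsequence lives in the single compact arc $\{(\lambda,w_\lambda)\}\cup(\{s\}\times(0,w_s])$, and if not one shows directly that $d_w\bigl((\lambda,w_\lambda),s_n''\bigr)\to 0$ by exhibiting, for each $\varepsilon$, indices $m,n$ with $s_m\wedge s_n=\lambda$. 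Your $A_\infty$ does detect the right vertex $v=\lambda$ in the examples above, so the strategy is salvageable, but you must replace the incorrect compactness claim with this kind of direct estimate.

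Your infinite case is also only a sketch: the displayed inequality ``$\sum_{t\in A_m\triangle A_n}w_t\le d_w(y_m,y_n)+r_m+r_n\cdot(\text{small terms})$'' is not a correct formula, and the finiteness of $\sum_{t\in A_\infty}w_t$ needs an actual argument (fix $m$ and bound tails of the branch by $d_w(y_m,y_n)$ for suitably large $n$, as the paper does).
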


\begin{proof} The definition of the metric on $\overline T_w$ implies that the metric space $T_w$ is a dense subspace of the metric space $\overline T_w$.
To {prove} that $\overline T_w$ is a completion of $T_w$, it suffices to show that every Cauchy sequence $(s''_n)_{n\in\w}$ in the metric space $T_w$ has a limit in the space $\overline T_w$. If for some element $t$ of $T_w$, the set $\{n\in\w:s''_n=t\}$ is infinite, then $t$ is the limit of the Cauchy sequence $(s''_n)_{n\in\w}$ and we are done. So, we can assume that all elements of the Cauchy sequence $(s''_n)_{n\in\w}$ are pairwise distinct and moreover all of them are distinct from the root $(r_T,0)$ of the $\IR$-tree $T_w$. For every $n\in\w$, write the element $s''_n$ of $T_w$ as a pair $(s_n,s_n')$ where $s_n\in T$ and $s_n'\in(0,w_{s_n}]$. Let $\check s_n\defeq \max({\downarrow}s_n\setminus\{s_n\})$ be the immediate predecessor of $s_n$ in the tree $T$.

In the tree $T$, consider the set 
$$\Lambda\defeq\{t\in T:|\{n\in\w:t\preceq\check s_n\}|=\w\}$$and observe that it contains the root $r_T$ of the tree $T$ and hence is not empty.  
Two cases are possible.
\smallskip

1. First assume that the set $\Lambda$ has no maximal elements. In this case $\Lambda$ contains an increasing sequence $(\lambda_n)_{n\in\w}$. For this sequence the set $b\defeq \bigcup_{n\in\w}{\downarrow}\lambda_n$ is an infinite branch of the tree $T$ such that for every $t\in b$ the set $\{n\in\w:t\preceq \check s_n\}$ is infinite. Write the branch $b$ as $b=\{b_n\}_{n\in\w}$, where $b_n$ is a unique element of $b$ such that $|{\downarrow}b_n|=n+1$. We claim that $b\in\partial T_w$. Since the sequence $(s''_n)_{n\in\w}$ is Cauchy, for every  $\e>0$, there exists $m\in\IN$ such that $d_w(s''_n,s''_m)<\e$ for every $n\ge m$. 

Since the sequence $(b_n)_{n\in\w}$ is unbounded in the tree $T$, there exists a number $i\in\w$ such that $b_i\not\preceq \check s_m$. Given any number $j\ge i$, find a number $n\ge m$ such that $b_j\preceq \check s_n$.
Observe that $b_i,b_j,\check s_n$ and $
\check s_n\wedge\check s_m$ are elements of the linearly ordered set ${\downarrow}\check s_n$. Assuming that $\check s_m\wedge\check s_n\not\preceq b_i$, we conclude that $b_i\preceq\check s_m\wedge\check s_n\preceq \check s_m$, which contradicts the choice of the number $i$. This contradiction shows that $\check s_m\wedge\check s_n\preceq b_i$.

In this case, the definition of the metric $d_w$ ensures that $$\sum_{k=i+1}^jw_{b_k}\le-w_{s_n}-w_{s_m}+\sum_{x\in{\downarrow}s_n\triangle{\downarrow}s_m}w_x\le d_w(s''_m,s''_n)<\e$$ and hence $\sum_{k=i+1}^\infty w_{b_k}\le\e<\infty$, witnessing that $b\in\partial T_w$. 

\begin{picture}(100,180)(-200,-20)
\put(0,0){\line(0,1){50}}
\put(0,0){\line(1,1){50}}
\put(0,60){\circle*{1}}
\put(0,65){\circle*{1}}
\put(0,70){\circle*{1}}
\put(0,80){\line(0,1){40}}

\put(0,90){\line(-1,1){50}}
\put(-25,115){\circle*{3}}
\put(-33,107){$\check s_n$}
\put(-50,140){\circle*{3}}
\put(-58,132){$s_n$}

\put(0,90){\circle*{3}}
\put(3,87){$b_j$}
\put(0,110){\circle*{3}}
\put(3,107){$b_{j+1}$}

\put(0,130){\circle*{1}}
\put(0,135){\circle*{1}}
\put(0,140){\circle*{1}}

\put(0,0){\circle*{3}}
\put(3,-6){$\check s_m\wedge\check s_n$}
\put(0,20){\circle*{3}}
\put(3,17){$b_i$}
\put(0,40){\circle*{3}}
\put(3,37){$b_{i+1}$}
\put(25,25){\circle*{3}}
\put(28,19){$\check s_m$}
\put(50,50){\circle*{3}}
\put(53,44){$s_m$}
\end{picture}

Since the sequence $(s''_n)_{n\in\w}$ is Cauchy and $\e>0$ was arbitrary, the equality $b=\lim_{n\to\infty}s''_n$ will follow as soon as we check that  $\overline d_w(b,s''_n)<2\e$ for infinitely many numbers $n$. Indeed, for every number $n\ge m$ in the infinite set $\{n\in\w:b_i\preceq \check s_n\}$ we have $\check s_n\wedge \check s_m\preceq b_i$ and
$$
\begin{aligned}
\overline d_w(s''_n,b)&=s_n'-w_{s_n}+\sum_{x\in {\downarrow}s_n\triangle b}w_x=s_n'-w_{s_n}+\sum_{x\in{\downarrow}s_n\setminus b}w_x+\sum_{x\in b\setminus{\downarrow}s_n}w_x\\
& \le s'_n-w_{s_n}+\sum_{x\in{\downarrow}s_n\setminus{\downarrow}b_i}w_x+{\sum_{k=i+1}^\infty w_{b_k}}\\
&\le s'_n-w_{s_n}+\sum_{x\in{\downarrow}s_n\setminus{\downarrow}s_m}w_x+\sum_{k=i+1}^\infty w_{b_k}\le d_w(s''_n,s''_m)+\e<2\e.
\end{aligned}
$$

2. Next, assume that the set $\Lambda$ has a maximal element $\lambda$. Consider the infinite set $I\defeq\{n\in\w:\lambda\preceq \check s_n\}$. If for some $s\in T$, the set $J\defeq\{n\in I:s=s_n\}$ is infinite, then the maximality of $\lambda$ ensures that $\lambda$ is the immediate predecessor of $s$ in the tree $T$. Then the subsequence $(s_n'')_{n\in J}$ of the sequence $(s''_n)_{n\in\w}$ is contained in the compact set $\{(\lambda,w_\lambda)\}\cup(\{s\}\times(0,w_s])$ and hence the Cauchy sequence $(s_n'')_{n\in\w}$ converges to some points of this compact set. So, assume that for every $s\in T$, the set $\{n\in I:s_n=s\}$ is finite. 
In this case we shall show that the Cauchy sequence $(s_n'')_{n\in\w}$ converges to the element $\lambda''\defeq (\lambda,w_\lambda)$ of the $\IR$-tree $T_w$. Since the sequence $(s_n'')_{n\in\w}$ is Cauchy and consists of pairwise distinct elements, it suffices to show that for every $\e>0$ there exists $n\in\w$ such that $d_w(\lambda'',s_n'')<\e$. Since the sequence $(s''_n)_{n\in\w}$ is Cauchy, there exists $m\in\w$ such that $d_w(s_n'',s_k'')<\e$ for all $n,k\ge m$. We can assume that $\lambda\preceq \check s_m\prec s_m$. Thus the element $\lambda$ has an immediate successor $\hat\lambda$ in the tree $T$ such that $\lambda\prec\hat\lambda\preceq s_m$. The maximality of $\lambda$ ensures that $\hat\lambda\notin\Lambda$ and hence the set $\{n\in\w:\hat \lambda\preceq \check s_n\}$ is finite. Then we can choose a number $n\in I$ such that $n\ge m$ and $\hat\lambda\not\preceq\check s_n$. Then $s_n\wedge s_m=\check s_n=\lambda$ and 
$$d_w(\lambda'',s''_n)\le s'_n-w_{s_n}+s'_m-w_{s_m}+\sum_{x\in{\downarrow}s_n\triangle{\downarrow}s_m}w_x=d_w(s_n'',s_m'')<\e.$$

\begin{picture}(100,100)(-200,-20)
\put(0,0){\line(1,1){60}}
\put(0,0){\line(-1,1){30}}

\put(0,0){\circle*{3}}

\put(-14,-10){$\hat s_n{=}\lambda$}
\put(30,30){\circle*{3}}
\put(33,22){$\hat\lambda$}
\put(60,60){\circle*{3}}
\put(63,55){$s_m$}
\put(-30,30){\circle*{3}}
\put(-38,22){$s_n$}
\end{picture}
\end{proof}



Propositions~\ref{Rtree}, \ref{calzup}, \ref{przel}, \ref{zup} and the definition of the metric $\overline d_w$ imply the following corollary describing the property of the completion $\overline T_w$ of the $\IR$-tree $T_w$ of a weighted tree $(T,w)$.

\begin{corollary} \label{crtree} For every weighted tree $(T,w)$,
\begin{enumerate}
\item[\textup{(1)}] the complete metric space $\overline{T}_w$ is an $\IR$-tree and hence is an injective metric space;
\item[\textup{(2)}] the space $\overline T_w$ is compact if and only if the weighted tree $(T,w)$ is precompact;
\item[\textup{(3)}] the space $\overline T_w$ is separable if and only if the tree $T$ is countable.
\end{enumerate}
\end{corollary}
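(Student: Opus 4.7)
The plan is to derive the three items by combining the four preceding propositions with one direct verification about $\overline T_w$.

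For item (1), Proposition~\ref{zup} tells us that $\overline T_w$ is complete, so by Proposition~\ref{Rtree} it suffices to show that $\overline T_w$ is an $\IR$-tree. I would argue this by density. The metric space $T_w$ is already known to be an $\IR$-tree, and $T_w$ is dense in $\overline T_w$ by Proposition~\ref{zup}. Using the explicit case formulas for $\bar d_w$, for any two points $x'',y''\in\overline T_w$ one can identify the metric segment $[x'',y'']$: it consists of a piece going down from $x''$ to the "meeting point" $x''\wedge y''$ and another piece going up from there to $y''$, and in each piece the metric is additive by the formulas defining $\bar d_w$. Approximating $x'',y''$ by sequences in $T_w$, one extracts from the corresponding $\IR$-convexity in $T_w$ an isometry $[0,\bar d_w(x'',y'')]\to [x'',y'']$. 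The branching condition $[x'',y'']\cap[y'',z'']=\{y''\}\Rightarrow\bar d_w(x'',z'')=\bar d_w(x'',y'')+\bar d_w(y'',z'')$ is verified similarly: the case analysis in the definition of $\bar d_w$ forces $y''=x''\wedge z''$ under the hypothesis, and then the additivity is immediate from the formulas. Once $\overline T_w$ is shown to be an $\IR$-tree, Proposition~\ref{Rtree} finishes the item.

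For item (2), recall that a metric space is compact iff it is complete and totally bounded, and that a metric space is totally bounded iff its completion is totally bounded (equivalently, iff its completion is compact). Proposition~\ref{zup} gives that $\overline T_w$ is a completion of $T_w$, hence $\overline T_w$ is compact iff $T_w$ is totally bounded. By Proposition~\ref{calzup} this is equivalent to precompactness of $(T,w)$.

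For item (3), use the classical fact that a metric space is separable iff its completion is separable (any countable dense subset of $T_w$ remains dense in $\overline T_w$, and conversely any countable dense subset of $\overline T_w$ can be pushed into $T_w$ by small perturbations, or, more cleanly, $T_w$ itself contains a countable dense set iff it is separable). Combining this with Proposition~\ref{zup} and Proposition~\ref{przel}, separability of $\overline T_w$ is equivalent to countability of $T$.

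The main obstacle is item (1): the formal verification that $\overline T_w$ satisfies the $\IR$-tree axioms. The definition of $\bar d_w$ splits into five cases, so one must organize the case analysis efficiently---probably by noting that for any $x'',y''\in\overline T_w$ one can define an "infimum" $x''\wedge y''\in\overline T_w$ and then show that $\bar d_w(x'',y'')=\bar d_w(x'',x''\wedge y'')+\bar d_w(x''\wedge y'',y'')$ with each of the two summands realised along a natural isometric copy of a real interval. Items (2) and (3) are then essentially formal consequences of results already proved.
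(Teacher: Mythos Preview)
Your proposal is correct and matches the paper's approach: the paper itself offers only the one-line justification that the corollary follows from Propositions~\ref{Rtree}, \ref{calzup}, \ref{przel}, \ref{zup} together with the explicit definition of $\bar d_w$, and your argument fills in exactly these steps. In particular, your reductions for items (2) and (3) via the standard completion facts are precisely what is intended, and for item (1) the paper, like you, relies on a direct inspection of the case formulas for $\bar d_w$ rather than any additional lemma.
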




\section{Proof of Theorem~\ref{t:hedgehog}}\label{s:hedgehog}

To prove Theorem~\ref{t:hedgehog}, we need to demonstrate the equivalence of the following conditions for every metric space $X$:
\begin{enumerate}
\item[\textup{(1)}] $X$ is a Lipschitz image of an injective metric space;
\item[\textup{(2)}] $X$ is a Lipschitz image of some complete $\IR$-tree;
\item[\textup{(3)}] $X$ is Lipschitz connected.
\end{enumerate}

The implication $(2)\Ra(1)$ follows from Proposition~\ref{Rtree} (saying that every complete $\IR$-tree is an injective metric space), and the implication $(1)\Ra(3)$ follows from the Lipschitz connectedness of injective metric spaces, proved in Example~\ref{ex:injective=>Lc}.

To prove that $(3)\Ra(2)$,  assume that a metric space $X$ is Lipschitz connected. Fix any point $x_0\in X$ and {for} every point $a\in X$ find a real number $w_a$ and a $1$-Lipschitz path $\gamma_a\colon [0,w_a]\to X$ such that $\gamma_a(0)=x_0$ and $\gamma_a(w_a)=a$.  If $a=x_0$, then will assume that $w_a=0$. 

Consider the  tree $T\defeq X$ endowed with the partial order $$\preceq:=\{(x,y)\in X\times X:x=y\;\vee\;x=x_0\}$$ The function $w:X\to [0,\infty)$, $w:a\mapsto w_a$, turns $T$ into a weighted tree. The weighted tree $(T,w)$ generates the $\IR$-tree $T_w$. Since the tree $T$ contains no infinite branches, the $\IR$-tree $T_w$ coincides with its completion $\overline T_w=T_w\cup\partial T_w$ and hence $T_w$ is a complete $\IR$-tree.

The $1$-Lipschitz maps $\gamma_a$, $a\in X$, compose a surjective $1$-Lipschitz map $\gamma:T_w\to X$ such that $\gamma ((r_T,0))=x_0$ and $\gamma((a,t))=\gamma_a(t)$ for $(a,t)\in \bigcup_{a\in X}\{a\}\times(0,w_a]$. Indeed for any $(a,t),(b,s)\in T_w$ if $a=b$
	$$d_X(\gamma((a,t)),\gamma((b,s)))=d_X(\gamma_a(t),\gamma_a(s))\leq |t-s|,$$
	because $\gamma_a$ is a 1-Lipschitz map. If $a\ne b$
	$$
	\begin{aligned}
		d_X(\gamma((a,t)),\gamma((b,s)))&=d_X(\gamma_a(t),\gamma_b(s))\leq d_X(\gamma_a(t),x_0)+d_X(x_0,\gamma_b(s))=\\ &=d_X(\gamma_a(t),\gamma_a(0))+d_X(\gamma_b(0),\gamma_b(s))\le t+s,
	\end{aligned}
	$$
	because $\gamma_a$, $\gamma_b$ are 1-Lipschitz maps. The $1$-Lipschitz map $\gamma$ witnesses that $X$ is a $1$-Lipschitz image of the complete $\IR$-tree $T_w$.

\section{Proof of Theorem~\ref{t:compact}}\label{s:compact}

Theorem~\ref{t:compact} will follow as soon as we check the equivalence of the following conditions for every metric space $X$:
\begin{enumerate}
\item[\textup{(1)}] $X$ is a Lipschitz image of  some compact $\IR$-tree;
\item[\textup{(2)}] $X$ is a Lipschitz image of a totally bounded injective metric space;
\item[\textup{(3)}] $X$ is a Lipschitz image of a compact $1$-Lipschitz connected metric space;
\item[\textup{(4)}] $X$ is a Lipschitz image of an $L$-Lipschitz connected compact metric space for some $L\ge 1$;
\item[\textup{(5)}] $X$ is compact, Lipschitz connected, and the intrinsic metric $d_{IX}$  is totally bounded.
\end{enumerate}

The implication $(1)\Ra(2)$ follows from Proposition~\ref{Rtree} (saying that every complete $\IR$-tree is an injective metric space), and the total boundedness of compact metric spaces.
\smallskip

The implication $(2)\Ra(3)$ follows from the completeness and $1$-Lipschitz connectedness of injective metric spaces, and the implication $(3)\Ra(4)$ is obvious.
\smallskip

$(4)\Ra(5)$ Assume that a metric space $(X,d_X)$ is  the image of some $L$-Lipschitz connected compact metric space $(Y,d_Y)$ under a Lipschitz map $f:Y\to X$. Then $X$ is compact and Lipschitz connected, so the intrinsic metric $d_{IX}$ on $X$ is well-defined.
Scaling the metric of the metric space $Y$, we can assume that the Lipschitz map $f$ is $1$-Lipschitz. In this case the definition of the intrinsic metric implies that the function $f$ remains $1$-Lipschitz with respect to the intrinsic metrics on the metric spaces $X$ and $Y$. The $L$-Lipschitz connectedness of the metric space $Y$ implies that $d_Y\le d_{IY}\le L\cdot d_Y$, which implies that the identity map $(Y,d_Y)\to (Y,d_{IY})$ is Lipschitz and hence the metric space $(Y,d_{IY})$ is totally bounded and so is its $1$-Lipschitz image $(X,d_{IX})$.
\smallskip

$(5)\Ra(1)$ Assume that the metric space $(X,d_X)$ is compact, Lipschitz connected, and its intrinsic metric $d_{IX}$ is totally bounded. Let $D\defeq\sup\{d_{IX}(x,y):x,y\in X\}$ be the diameter of the (totally) bounded metric space $(X,d_{IX})$.

Since the metric space $(X,d_{IX})$ is totally bounded, there exists a sequence $(\F_n)_{n\in\w}$ of finite partitions 
of $X$ such that $\F_0=\{X\}$ and for every $n\in\w$, every set $F\in\F_n$  has $d_{IX}$-diameter $\sup\{d_{IX}(x,y):x,y\in F\}<\frac{D+1}{2^{n}}$. Replacing each partition $\F_{n}$ with $n\in\IN$ by the partition $\F_n\wedge \F_{n-1}\defeq\{A\cap B:A\in\F_{n-1},\;\;B\in\F_n,\;\;A\cap B\ne\varnothing\}$, we can assume that every set $F\in\F_n$ is contained in a unique set  $\check F\in\F_{n-1}$. 
For every $n\in\w$ and $F\in\F_n$, choose a point $c_F\in F$.

Consider the tree $T\defeq \bigcup_{n\in\w}(\{n\}\times\F_n)$ endowed with the partial order $\preceq$, defined by $(n,F)\preceq (m,E)$ iff $n\le m$ and $E\subseteq F$. 
Next, endow the tree $T$ with the weight function $w:T\to[0,\infty)$ defined by 
$$
w(n,F)=w_{(n,F)}=\begin{cases}
\frac{D+1}{2^{n-1}}&\mbox{ if $n>0$};\\
0&\mbox{if $n=0$}.
\end{cases}
$$
\begin{figure}[h]
		\includegraphics[scale=0.3]{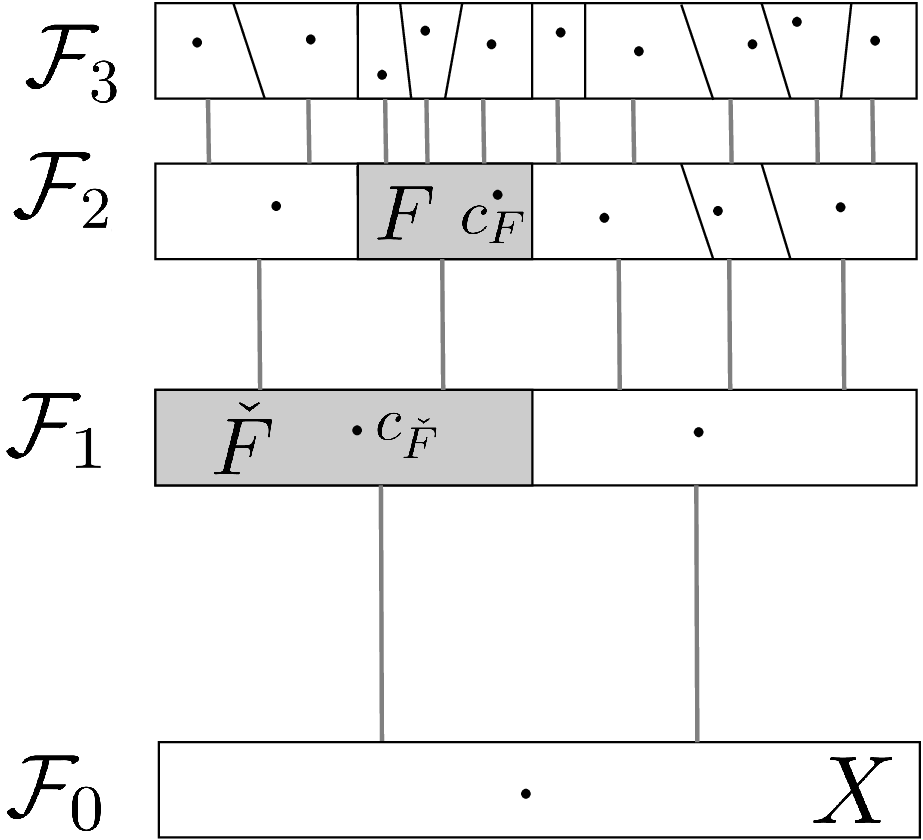}
	\caption{The weighted tree $(T,w)$. }
	\end{figure}

Let $T_w$ be the $\IR$-tree of the weighted tree $(T,w)$. Since the families $\mathcal{F}_n$ are finite and $\sum_{t\in b}w_t = 2(D+1)$ for every $b\in\partial T$, the weighted tree $(T,w)$ is precompact. By Proposition~\ref{calzup}, the metric space $T_w$ is totally bounded and hence its completion $\overline T_w$ is a compact $\IR$-tree.

It remains to construct a $1$-Lipschitz surjective map $\overline\gamma:\overline T_w\to X$. For every $n\in\IN$ and $F\in\F_n$, choose a $1$-Lipschitz function $\gamma_{n,F}:[0,w_{(n,F)}]\to X$ such that $\gamma_{(n,F)}(0)=c_{\check F}$ and $\gamma_{(n,F)}(w_{(n,F)})=c_F$. The function $\gamma_{(n,F)}$ exists because $\{c_{\check F},c_F\}\subseteq \check F\cup F=\check F$ and the set $\check F\in\F_{n-1}$ has $d_{IX}$-diameter $<\frac{D+1}{2^{n-1}}=w_{(n,F)}$. Next, consider the function
$\gamma:T_w\to X$ defined by 
$\gamma((0,X),0)=c_X$ and $\gamma((n,F),t)=\gamma_{n,F}(t)$ for every $n\in\IN$, $F\in\F_n$ and $t\in(0,w_{(n,F)}]$.
The definition of the metric $d_w$ on the $\IR$-tree $T_w$ ensures that the map $\gamma:T_w\to X$ is $1$-Lipschitz. Observe that the image $\gamma[T_w]$ is dense in $X$ (because it contains the dense subset $\{c_F:n\in\IN,\;F\in\F_n\}$ of $X$).

Since the metric space $X$ is complete, the $1$-Lipschitz map $\gamma:T_w\to X$ admits a unique $1$-Lipschitz extension $\overline{\gamma}:\overline T_w\to X$. The total boundedness of the $\IR$-tree $T_w$ implies the compactness of its completion $\overline T_w$. Then the image $\overline\gamma[\overline T_w]$ is a compact dense subset of $X$, witnessing that the $1$-Lipschitz map $\overline \gamma:\overline T_w\to X$ is surjective and hence $X$ is a $1$-Lipschitz image of the compact $\IR$-tree $\overline T_w$.
 
\begin{remark}
Although compactness of $(X,d_X)$ follows from any of the conditions (1)--(5), the compactness of $(X,d_X)$ is not sufficient to infer these conditions, which is showed in Example \ref{e: d vs dp}.
\end{remark}

\section{Proof of Theorem~\ref{t:analytic}}\label{s:separable}

Theorem~\ref{t:analytic} will follow as soon as we check the equivalence of the following conditions for every metric space $X$:
\begin{enumerate}
\item[\textup{(1)}] $X$ is a Lipschitz image of  some separable complete $\IR$-tree;
\item[\textup{(2)}] $X$ is a Lipschitz image of a separable injective metric space;
\item[\textup{(3)}] $X$ is a $1$-Lipschitz image of the Urysohn {universal} metric space; 
\item[\textup{(4)}] $X$ is a $1$-Lipschitz image of a $1$-Lipschitz connected separable complete metric space;
\item[\textup{(5)}] $X$ is a Lipschitz image of an $L$-Lipschitz connected analytic metric space for some $L\ge 1$;
\item[\textup{(6)}] $X$ is analytic, Lipschitz connected, and the intrinsic metric $d_{IX}$ is separable.
\end{enumerate}

The implication $(1)\Ra(2)$ follows from Proposition~\ref{Rtree} (saying that every complete $\IR$-tree is an injective metric space).
\smallskip

To prove that $(2)\Ra(3)$, assume that the metric space $X$ is the image of a separable injective space $Y$ under a Lipschitz map $f:Y\to X$. Scaling the metric of the metric space $Y$, we can assume that the Lipschitz map $f$ is $1$-Lipschitz. By \cite{Urys}, the Urysohn universal metric space $\mathbb U$ contains an isometric copy of any separable metric space. So, there exists an isometric embedding $e:Y\to\mathbb U$. By the injectivity of $Y$, the $1$-Lipschitz map $e^{-1}:e[Y]\to Y$ defined on the subset $e[Y]$ of $\mathbb U$ admits a $1$-Lipschitz extension $g:\mathbb U\to Y$. Then $f\circ g:\mathbb U\to X$ is a surjective $1$-Lipschitz map, witnessing that $X$ is a $1$-Lipschitz image of the Urysohn universal metric space $\mathbb U$.
\smallskip

To see the implication $(3)\Ra(4)$ it suffices to check that the  Urysohn universal metric space $\mathbb U$ is $1$-Lipschitz connected. Given any points $x,y\in \mathbb U$, consider the function $f:\{0,d(x,y)\}\to\{x,y\}\subseteq\mathbb U$ such that $f(0)=x$ and $f(d(x,y))=y$. By \cite{Bo, H} {(see also \cite{BS, KR, MJ})}, the isometric embedding $f:\{0,d(x,y)\}\to\mathbb U$ extends to a $1$-Lipschitz map $f:[0,d(x,y)]\to\mathbb U$, witnessing that $\mathbb U$ is $1$-Lipschitz connected.
\smallskip

The implication $(4)\Ra(5)$ is trivial.
\smallskip

$(5)\Ra(6)$ Assume that for some $L\ge 1$, a metric space $(X,d_X)$ is  the image of some $L$-Lipschitz connected analytic metric space $(Y,d_Y)$ under a Lipschitz map $f:Y\to X$. Then $X$ is analytic and Lipschitz connected, so the intrinsic metric $d_{IX}$ on $X$ is well-defined.
Scaling the metric of the metric space $Y$, we can assume that the Lipschitz map $f$ is $1$-Lipschitz. In this case the definition of the intrinsic metric implies that the function $f$ remains $1$-Lipschitz with respect to the intrinsic metrics on the metric spaces $X$ and $Y$. The $L$-Lipschitz connectedness of the metric space $Y$ implies that $d_Y\le d_{IY}\le L\cdot d_Y$, which implies that the identity map $(Y,d_Y)\to (Y,d_{IY})$ is Lipschitz and hence the metric space $(Y,d_{IY})$ is separable and so is its $1$-Lipschitz image $(X,d_{IX})$.
\smallskip

$(6)\Ra(1)$ Assume that the metric space $(X,d_X)$ is analytic, Lipschitz connected, and its intrinsic metric $d_{IX}$ is separable. By Proposition~\ref{d vs dp}, the metric space $X_I\defeq (X,d_{IX})$ is analytic and hence it is the image of a separable complete metric space $(Z,d_Z)$ under a  surjective continuous map $f:Z\to X_I$. Consider the metric $\hat d_Z$ on $Z$, defined by the formula $\hat d_Z(z,z')\defeq\max\{d_Z(z,z'),d_{IX}(f(z),f(z'))\}$. The continuity of the function $f$ ensures that the metric $\hat d_Z$ is topologically equivalent to the metric $d_Z$. Since $d_Z\le\hat d_Z$, the completeness of the metric $d_Z$ implies the completeness of the metric $\hat d_Z$. Replacing the metric $d_Z$ by the metric $\hat d_Z$, we can assume that the function $f:Z\to X_I$ is $1$-Lipschitz. 

Since the metric space $Z$ is separable, there exists a sequence $(\F_n)_{n\in\w}$ of countable partitions of $Z$ such that $\F_0=\{Z\}$ and for every $n\in\IN$, every set $F\in\F_n$ has diameter $\sup\{d_Z(x,y):x,y\in F\}<\frac{1}{2^{n}}$. Replacing each partition $\F_{n}$ by the partition $\F_n\wedge \F_{n-1}\defeq\{A\cap B:A\in\F_{n-1},\;\;B\in\F_n,\;\;A\cap B\ne\varnothing\}$, we can assume that every set $F\in\F_n$ is contained in a unique set $\check F\in\F_{n-1}$. 
For every $n\in\w$ and $F\in\F_n$, choose a point $c_F\in F$.

Consider the countable tree $T\defeq \bigcup_{n\in\w}(\{n\}\times\F_n)$ endowed with the partial order $\preceq$, defined by $(n,F)\preceq (m,E)$ iff $n\le m$ and $E\subseteq F$.  Next, endow the tree $T$ with the weight function $w:T\to[0,\infty)$ defined by $w_{(0,Z)}=0$ and $w_{(n,F)}=d_{IX}(f(c_{\check F}),f(c_F))+\tfrac1{2^n}$ for every $n\in\IN$ and $F\in\mathcal F_n$ (in this formula $\check F$ denotes the unique set in $\F_{n-1}$ that contains the set $F$).

Let $T_w$ be the $\IR$-tree of the weighted tree $(T,w)$ and $\overline T_w=T_w\cup\partial T_w$ be its completion. Proposition~\ref{przel} ensures that the $\IR$-tree $T_w$ is separable and hence its completion $\overline T_w$ is a separable complete $\IR$-tree.

It remains to construct a $1$-Lipschitz surjective map $\overline\gamma:\overline T_w\to X$. For every $n\in\IN$ and $F\in\F_n$, choose a $1$-Lipschitz function $\gamma_{n,F}:[0,w_{(n,F)}]\to X$ such that $\gamma_{n,F}(0)=f(c_{\check F})$ and $\gamma_{n,F}(w_{(n,F)})=f(c_F)$. The function $\gamma_{n,F}$ exists because $d_{IX}(f(c_{\check F}),f(c_F))<w_{(n,F)}$. Next, consider the function
$\gamma:T_w\to X$ defined by 
$\gamma((0,X),0)=f(c_{Z})$ and $\gamma((n,F),t)=\gamma_{n,F}(t)$ for every $n\in\IN$, $F\in\F_n$ and $t\in(0,w_{(n,F)}]$.
The definition of the metric $d_w$ on the $\IR$-tree $T_w$ ensures that the map $\gamma:T_w\to X$ is $1$-Lipschitz. 

Consider the set $\partial T$ of all infinite branches of the tree $T$ and observe that every branch $b\in\partial T$ is equal to the set $\{(n,F_n):n\in\w\}$ for some decreasing sequence $(F_n)_{n\in\w}\in\prod_{n\in\w}\F_n$. 
Observe that for every $n\ge 2$, the set $\{c_{\check F_n},c_{F_n}\}$ is a subset of the set $\check F_n\in\mathcal F_{n-1}$ and hence $d_Z(c_{F_{n-1}},c_{F_n})=d_Z(c_{\check F_n},c_{F_n})<\frac1{2^{n-1}}$.
Then the sequence $(c_{F_n})_{n\in\w}$ is Cauchy in the complete metric space $Z$ and has a limit $\lambda(b)\defeq\lim_{n\to\infty}c_{F_n}\in Z$, which coincides with the unique point of the  intersection $\bigcap_{n\in\w}\overline{F_n}$.  
The correspondence $\lambda:\partial T\mapsto\lambda(b)$, determines a function $\lambda:\partial T\to Z$. Observe that for every point $z\in Z$ there exists a unique branch $b=\{(n,F_n)\}_{n\in\w}\in\partial T$ such that $z\in\bigcap_{n\in\w}F_n=\bigcap_{n\in\w}\overline F_n=\{\lambda(b)\}$, witnessing that the function $\lambda:\partial T\to Z$ is surjective. Taking into account that diameters of the sets in the partition $\F_n$ tend to zero as $n$ tends to infinity, one can show that the map $\lambda:\partial T\to Z$ is continuous with respect to the topology on $\partial T$ inherited from the Tychonoff product $\prod_{n\in\w}(\{n\}\times\F_n)$ of discrete spaces $\{n\}\times\F_n$, $n\in\w$. 

Observe that for every branch $b=\{(n,F_n)\}_{n\in\w}$ of the tree $T$ and every $n\in\IN$ we have {$w_{(n,F_n)}=d_{IX}(f(c_{F_{n-1}}),f(c_{F_n}))+\frac1{2^n}\le d_Z(c_{F_{n-1}},c_{F_n})+\frac1{2^n}<\frac1{2^{n-1}}+\tfrac1{2^n}$} and hence $\sum_{t\in b}w_t=\sum_{n\in\w}w_{(n,F_n)}<\infty$, witnessing that $\partial T_w=\partial T$.

Since the map $\gamma:T_w\to X$ is $1$-Lipschitz, there exists a unique $1$-Lipschitz extension $\overline \gamma:\overline T_w\to\overline X$ to the completion of the space $X$. We claim that $\overline\gamma[\overline T_w]=X$. 

The inclusion $\overline\gamma[\overline T_w]\subseteq X$ will follow as soon as we check that $\overline\gamma[\partial T]\subseteq X$. Fix any element $b\in\partial T$ and find a unique decreasing sequence of sets $(F_n)_{n\in\w}\in\prod_{n\in\w}\F_n$ such that $b=\{(n,F_n)\}_{n\in\w}$.
Then $\lambda(b)=\lim_{n\to\infty}c_{F_n}$. 
 The definition of the metric on the $\IR$-tree $\overline T_w$ ensures that $$b=\lim_{n\to\infty}((n,F_n),w_{(n,F_n)})$$ in $\overline T_w$ and hence $\overline \gamma(b)=\lim_{n\to\infty}\gamma((n,F_n),w_{(n,F_n)})=\lim_{n\to\infty}\gamma_{n,F}(w_{(n,F_n)})=\lim_{n\to\infty}f(c_{F_n})=f(\lim_{n\to\infty}c_{F_n})=f(\lambda(b))\in f[Z]=X$.
 
 Next, we show that $X\subseteq\overline\gamma[\overline T_w]$. Given any point $x\in X$, choose any point  $z\in f^{-1}(x)\subseteq Z$ and find a unique branch $b=\{(n,F_n)\}_{n\in\w}\in\partial T$ such that $z\in\bigcap_{n\in\w}F_n$. Then $z=\lambda(b)$ and $x=f(\lambda(b))=\overline\gamma(b)$. 
 
Therefore, $\overline \gamma:\overline T_w\to X$ is a $1$-Lipschitz surjective map witnessing that $X$ is a $1$-Lipschitz image of the separable complete $\IR$-tree $\overline T_w$. 

\begin{remark}
Although analyticity and separability of $(X,d_X)$ follows from each of the conditions (1)--(6), the analyticity of $(X,d_X)$ is not sufficient to infer the equivalent conditions of Theorem \ref{t:analytic}, which is showed in Example \ref{e: d vs dp}.
\end{remark}

\section*{Acknowledgement} This work was co-financed by the Minister of Science (Poland) under the ``Regional Excellence Initiative'' program (project no.: RID/SP/0015/2024/01).

\newpage

\end{document}